\newtheorem{thm}{Theorem}
\newtheorem{lem}[thm]{Lemma}
\newtheorem{prop}[thm]{Proposition}
\newtheorem{coro}[thm]{Corollary}
\theoremstyle{definition}
\theoremstyle{remark}
\newtheorem{rmk}[thm]{Remark}
\DeclareMathOperator{\sgn}{sgn}
\DeclareMathOperator{\supp}{supp}
\DeclareMathOperator{\diam}{diam}
\DeclareMathOperator{\dist}{dist}
\DeclareMathOperator{\divergence}{div}
\DeclareMathOperator{\proj}{proj}
\DeclareMathOperator{\vol}{vol}
\DeclareMathOperator{\image}{image}
\newcommand{\R}{\mathbb{R}}
\newcommand{\Z}{\mathbb{Z}}
\newcommand{\Cinfc}{{C^\infty_c(T^nM)}}
\newcommand{\Mn}{{\mathscr M_n}}
\newcommand{\mass}{{\mathbf M}}
\newcommand{\N}{\mathbb{N}}
\newcommand{\Pn}{{\mathscr{V}_n}}
\newcommand{\cyccondition}{{(Cyc)}}
\newcommand{\holcondition}{{(Hol)}}
\newcommand{\measure}[1]{{\wr{#1}\wr}}
\newcommand{\base}{{\bar\mu}}
\newcommand{\lebesgue}{{\mathsf{m}}}
\newcommand{\constisoperimetric}{{K}}
\newcommand{\baseapprox}{{\beta}}
\newcommand{\finalchain}{\eta_k}
\newcommand{\remainder}{\zeta}
\title{Polygonal approximations of closed parametric varifolds}
\author{Rodolfo R\'ios-Zertuche}
\begin{document}

\maketitle

\begin{center}
\emph{A mi t\'ia Angelines y a nuestra amiga Simone}
\end{center}

\begin{abstract}
 We define holonomic measures to be certain analogues of varifolds that keep track of the local parameterization and orientation of the submanifold they represent. They are Borel measures on the direct sum of several copies of the tangent bundle. 
 
 We show that there is an approximation to these by smooth singular chains whose boundaries and Lagrangian actions are controlled. 
 
 As an illustration of the usefulness of this result, we show how this can be applied to study foliations on the torus. We give other applications elsewhere.
\end{abstract}

\tableofcontents

\section{Introduction}
\label{sec:intro}
%

Given a smooth manifold $M$ with tangent bundle $TM$, and a curve \[\gamma\colon [0,T]\to M,\] 
consider the measure $\mu_\gamma$ on $TM$ induced by $\gamma$ by pushing forward the Lebesgue measure on $[0,T]$ under the map $d\gamma\colon [0,T]\to TM$. In other words, $\mu_\gamma$ is given by
\[\int_{TM} f\,d\mu_\gamma=\int_0^Tf(d\gamma(t))\,dt\]
for measurable $f\colon TM\to\R$. The measure $\mu_\gamma$ is known as the \emph{Young measure} associated to $\gamma$. It is a very useful object in the calculus of variations, and it has thus been studied extensively (see for example \cite{manhe,matheractionminimizing91,bangert,patrick,fathibook} and the references therein). In particular, it appears as the main subject of study in the Mather theory for Lagrangian systems \cite{matheractionminimizing91}.

In this paper we consider an $n$-dimensional generalization of this concept. Our measures will be certain Borel measures on the direct sum $T^nM$ of $n$ copies of the tangent bundle $TM$ of a smooth manifold $M$. As such, they are analogous to varifolds \cite{allard,almgrenvarifolds} because they are measures that induce currents, but contain more information as they keep track of the local parameterization and orientation, so one could refer to holonomic measures as a kind of ``parameterized varifold.'' The idea is to have a framework for the study of minimizers of anisotropic Lagrangians with no \emph{a priori} symmetries. See \cite[Section 1.2]{myvariationalstructure} for examples of such Lagranginas.

Note that since one can also consider the differential forms $\omega$ on $M$ as functions on $T^nM$, our measures $\mu$ induce normal currents $T_\mu$ given by 
\[\langle T_\mu,\omega\rangle=\int_{T^nM}\omega\,d\mu.\]

We distinguish two classes of measures. First, those for which the integrals of exact forms vanish, or equivalently, those for which the induced current $T_\mu$ has empty boundary. Second, those that can be approximated by measures induced by embeddings of closed submanifolds (or more precisely, by parameterized cycles). Our main result, Theorem \ref{thm:holonomic}, states that these two classes coincide.  We give precise definitions in Section \ref{sec:setting}, where we also state our result. Section \ref{sec:proof} is devoted to the proof of the theorem.

Our theorem is considerably more difficult than the existing one for the case of integral currents (see for example \cite[\S4.2.9]{federer}) because we deal with arbitrary superpositions of submanifolds, and we control simultaneously the boundary, the parameterizations, and the convergence of actions of continuous Lagrangians.

In Section \ref{sec:relative}, we give the statements of similar results for manifolds and submanifolds with boundary, which can be proved using minimal modifications to the proof of Theorem \ref{thm:holonomic}.

Before plunging into the proof of the theorem, we present in Section \ref{sec:applications} some simple applications to the theory of foliations on the torus, as an illustration of what our results can be used for. Other examples of applications are given in \cite{myvariationalstructure}.

The $n=1$ case of this result was proved by Bangert \cite{bangert} and Bernard \cite{patrick}. The author saw a letter by Mather \cite{matherletter} in which an idea similar to Bangert's was sketched. Our proof of that case is different to theirs. We remark that there exist other directions in which the philosophy of these works could be generalized, such as those studied in \cite{bangertcui,bernardbessi}.

As explained in Section \ref{sec:relative}, our proof can be adapted to prove a similar statement in which the submanifolds are allowed to have a boundary contained in certain subsets of $M$.

\begin{rmk}
The two classes of measures we consider have received in the past the names \emph{closed} and \emph{holonomic}, with either term confusingly referring to either of the two classes in different parts of the literature. 
\end{rmk}

\paragraph{Acknowledgements.}

I am deeply indebted to Gonzalo Contreras for suggesting the problem treated in this paper to me, and for numerous conversations on the subject. I am also deeply indebted to Patrick Bernard, Matilde Mart\'inez, and John N. Mather for numerous discussions on this topic.

\section{Setting and statement of results}
\label{sec:setting}
\paragraph{Riemannian structure.}
Throughout, we fix a compact, oriented $C^\infty$ manifold $M$, without boundary, of dimension $d\geq 1$. Denote by $TM$ its tangent bundle, and by $T^nM$ the direct sum bundle
\[T^nM=\underbrace{TM\oplus\cdots\oplus TM}_n.\]
The dimension of $T^nM$ is $d(n+1)$. We will refer to its elements as 
\[(x,v_1,v_2,\dots,v_n),\] 
where $x\in M$ and $v_i\in T_xM$. Sometimes for brevity we will write $v$ instead of $(v_1,\dots,v_n)$. 

We will use the word \emph{smooth} to refer to $C^\infty$ functions. The space of smooth, real-valued, compactly supported functions on $T^nM$ will be denoted $\Cinfc$.

We fix a Riemannian metric $g$ on $M$, together with its Levi-Civita connection. We denote $|v|=\sqrt{g(v,v)}$ for $v\in T_xM$ and we extend this norm to $T^nM$ by letting
\[|(v_1,v_2,\dots,v_n)|=\sqrt{|v_1|^2+|v_2|^2+\cdots+|v_n|^2}.\]

Let $\vol_k(v_1,\dots,v_k)$ denote the volume of the paralellepiped spanned by the vectors $v_1,\dots,v_k\in T_xM$, as induced by the metric $g$ on $M$ by
\[\vol_k(v_1,\dots,v_k)=\left|\det(g(v_i,v_j))_{i,j=1}^n\right|.\]
Abusing notations, we will also denote $\vol_k$ the $k$-dimensional volume of piecewise-smooth subsets of $M$, which is defined as the integral of the above $\vol_k$ over any piecewise parameterization of the given subset.

We will denote by $\Omega^k(M)$ the space of smooth differential $k$-forms on $M$. We will often consider these forms as smooth functions on $T^nM$.
We also define the projection $\pi:T^nM\to M$ by 
\[\pi(x,v_1,\dots,v_n)=x.\]

\paragraph{Mild measures.}
We let $\Pn$ be the space of \emph{subvolume functions}, that is, the space of real-valued, continuous functions $f\in C^0(T^nM)$ such that 
\[
 \sup_{(x,v_1,\dots,v_n)\in T^nM}\frac{|f(x,v_1,\dots,v_n)|}{1+\vol_n(v_1,\dots,v_n)}<+\infty.
\]
Note that all differential $n$-forms on $M$ belong to $\Pn$ when regarded as functions on $T^nM$. We endow $\Pn$ with the supremum norm and its induced topology.

We define the \emph{mass} of a positive Borel measure $\mu$ to be 
\[\mass(\mu)=\int_{T^nM}\vol_n(v_1,v_2,\dots,v_n)\,d\mu(x,v_1,\dots,v_n).\]
A positive Borel measure $\mu$ on $T^nM$ is \emph{mild} if $\mass(\mu)<+\infty$. Denote by $\Mn$ the space of mild measures.

Note that for all measures in $\Mn$, the differential $n$-forms on $M$ are integrable. It follows that these measures $\mu$ induce currents $T_\mu$, that is, bounded linear functionals $\Omega^n(M)\to \R$, given by
\[\langle T_\mu,\omega\rangle=\int\omega\,d\mu.\]

The space $\Mn$ is naturally embedded in the dual space $\Pn^*$ and we endow it with the topology induced by the weak* topology on $\Pn^*$. This topology is metrizable on $\Mn$. We can give a metric by picking a sequence of functions $\{f_i\}_{i\in\N}\subset \Cinfc$ that are dense in $\Pn$ and letting 
\begin{equation}\label{eq:metricMn}
 \dist_\Mn(\mu_1,\mu_2)= |\mass(\mu_1)-\mass(\mu_2)|+\sum_{m=1}^\infty \frac{1}{2^m\sup|f_m|}\left|\int f_m d\mu_1-\int f_m d\mu_2\right|.
\end{equation}

\paragraph{Cellular complexes.}
An $n$-dimensional \emph{cell} (or \emph{$n$-cell}) $\gamma$ is a smooth map
\[\gamma:D\subseteq\R^n\to M,\]
where $D$ is a subset of $\R^n$ homeomorphic to a closed ball, \emph{together} with a choice of coordinates $t=(t_1,t_2,\dots,t_n)$ on $D$. A \emph{chain} of $n$-cells is a formal linear combination of the form
\[a_1\gamma_1+a_2\gamma_2+\cdots+a_k\gamma_k\]
for real numbers $a_1,a_2,\dots,a_k$ and $n$-cells $\gamma_1, \gamma_2, \dots, \gamma_k$. 

Let $\gamma:D\subseteq \R^n\to M$ be an $n$-cell. Denote by $d\gamma$ the differential map associating, to each element in $D$, an element in $T^nM$. Explicitly, if we have coordinates $t=(t_1,t_2,\dots,t_n)$ on $D$, then
\[d\gamma(t)=\left(\gamma(t),\frac{\partial\gamma}{\partial t_1}(t),\frac{\partial\gamma}{\partial t_2}(t),\dots,\frac{\partial\gamma}{\partial t_n}(t)\right).\]
This map depends on our choice of coordinates $t$.

To an $n$-cell $\gamma$, we associate a measure $\measure\gamma$ on $T^nM$ defined by
\[\int_{T^nM} f\,d\measure\gamma=\int_D f(d\gamma(t))\,dt,\]
where $dt=dt_1\wedge\cdots\wedge dt_n$. In other words, the measure $\measure\gamma$ is the pushforward of Lebesgue measure on $D$ under the map $d\gamma$, $\measure{\gamma}=d\gamma_*\textrm{Leb}_D$. Similarly, to a chain of $n$-cells $\alpha=\sum_{i=1}^k a_i\gamma_i$, we associate the measure $\measure\alpha$ given by 
\[\measure\alpha=\sum_{i=1}^k a_i\measure{\gamma_i}.\]
The measure $\measure\alpha$ is an element of $\Mn$. We will say that a chain $\alpha$ is a \emph{cycle} if for all forms $\omega\in\Omega^{n-1}(M)$,
\[\int_{T^nM}d\omega\,d\measure\alpha=0.\]
This is equivalent to saying that the current induced by $\measure\alpha$ has no boundary.

\begin{thm}\label{thm:holonomic}
 Assume that $1\leq n\leq d$. Let $\mu\in\Mn$ be a positive mild measure. Then the following conditions are equivalent:
 \begin{enumerate}
  \item[\holcondition] For all forms $\omega\in\Omega^{n-1}(M)$,
  \[\int_{T^nM}d\omega\,d\mu=0.\]
  \item[\cyccondition] There exists a sequence $\{\alpha_k\}_{k\in\N}$ of cycles such that the induced measures $\measure{\alpha_k}\to\mu$ as $k\to\infty$ in the topology induced by the distance \eqref{eq:metricMn}, and such that each of the measures $\measure{\alpha_k}$ is a probability.
 \end{enumerate}
 Moreover, given any positive mild measure $\mu\in\Mn$ satisfying \holcondition\ and \cyccondition, a continuous $\mu$-integrable function $L\colon T^nM\to \R$, and a positive number $\varepsilon>0$, there is a cycle $\alpha$ such that 
 \[\left|\int L\,d\mu-\int L\,d\measure{\alpha}\right|<\varepsilon.\]
\end{thm}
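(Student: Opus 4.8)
The plan is to prove the equivalence of \holcondition\ and \cyccondition\ together with the final quantitative action statement, since all three are naturally proved at once.

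\medskip

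\noindent\textbf{Direction \cyccondition\ $\Rightarrow$ \holcondition.} This is the easy direction. If $\measure{\alpha_k}\to\mu$ and each $\alpha_k$ is a cycle, then for any fixed $\omega\in\Omega^{n-1}(M)$ the function $d\omega$ lies in $\Pn$ (it is a smooth $n$-form, hence bounded by a multiple of $1+\vol_n$), so $\int d\omega\,d\measure{\alpha_k}\to\int d\omega\,d\mu$. The left-hand side is $0$ for every $k$ by definition of cycle, hence $\int d\omega\,d\mu=0$. One small wrinkle: the metric \eqref{eq:metricMn} uses a countable dense family $\{f_m\}$, so I would first note that weak-* convergence in $\Pn^*$ plus convergence of masses gives convergence of $\int f\,d\cdot$ for \emph{all} $f\in\Pn$, which covers $f=d\omega$.

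\medskip

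\noindent\textbf{Direction \holcondition\ $\Rightarrow$ \cyccondition\ (and the action estimate).} This is the substantive half, and the quantitative statement at the end is really the engine: given $\mu$, $L$, and $\varepsilon$, I must manufacture a single cycle $\alpha$ with $\measure\alpha$ a probability and $|\int L\,d\mu - \int L\,d\measure\alpha|<\varepsilon$; letting $\varepsilon=1/k$ and $L$ range over a countable collection detecting the metric then yields the sequence $\alpha_k$. The construction proceeds in several stages. First, reduce to a \emph{local} problem: cover $M$ by finitely many coordinate charts, use a partition of unity to split $\mu$ into pieces supported (in the base) in single charts, approximate each piece, and recombine; the recombination will generically spoil the boundary-free condition, so the pieces must be corrected. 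Second, in a chart, discretize: partition $T^nM$ over the chart into small ``boxes'' on which $L$ and the relevant $(n-1)$-forms are nearly constant, so that $\mu$ is well approximated by a finite sum of point masses $\sum c_j\delta_{(x_j,v_1^j,\dots,v_n^j)}$, still mild, still approximately closed, and with total mass normalized to $1$. Third, replace each point mass $c_j\delta_{(x_j,v^j)}$ by (a multiple of) the measure $\measure{\gamma_j}$ of a small affine $n$-cell: take $\gamma_j$ to be a tiny parallelepiped at $x_j$ spanned by the directions $v_1^j,\dots,v_n^j$, scaled so that $\measure{\gamma_j}$ has mass $c_j$; because the cell is small, $\measure{\gamma_j}$ is concentrated near $(x_j,v^j)$ (up to the controlled spreading of the velocity as one moves across the cell, which shrinks with the cell), so $\int L\,d\measure{\gamma_j}\approx c_j L(x_j,v^j)$. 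The resulting chain $\beta=\sum_j\gamma_j$ then satisfies $|\int L\,d\mu-\int L\,d\measure\beta|$ small and $\measure\beta$ a probability, but $\beta$ is \emph{not} a cycle: $\partial\beta$ consists of the boundary faces of all the little parallelepipeds, and it need not cancel.

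\medskip

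\noindent\textbf{Killing the boundary.} The heart of the proof is to correct $\beta$ to a genuine cycle $\alpha$ without disturbing the action integral or the total mass by more than $\varepsilon$. The approximate-closedness hypothesis \holcondition\ says precisely that $\int d\omega\,d\mu\approx 0$ for all $\omega\in\Omega^{n-1}(M)$, which after the discretization translates into: the $(n-1)$-current $\partial T_{\measure\beta}$ is small in a suitable flat-type norm, or more concretely, that the net ``flux'' of the broken faces across any hypersurface is nearly zero. I would then invoke an isoperimetric/deformation argument (a quantitative version of the fact used in the classical integral-current case, \cite[\S4.2.9]{federer}, but carried out at the level of \emph{parameterized} chains): write the small boundary cycle $\partial\beta$ as $\partial\zeta$ for a thin $n$-chain $\zeta$ of controlled mass and controlled velocities — one connects up the leftover faces by short ``tubes'' or ``fins'' threaded through the chart, of total $\vol_n$-mass comparable to the flat norm of $\partial\beta$, hence $O(\varepsilon)$ — and set $\alpha=\beta-\zeta$ (after renormalizing the total mass back to $1$, which costs another $O(\varepsilon)$). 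Since $\zeta$ has small mass and its cells can be taken with velocity vectors of controlled size, $|\int L\,d\measure\zeta|\le \sup_{\supp}|L/(1+\vol_n)|\cdot(\text{small})$ using that $L\in\Pn$ after multiplying through — more carefully, $L$ is merely continuous and $\mu$-integrable, so I truncate $L$ to a bounded continuous function outside a large compact set carrying all but $\varepsilon$ of $\mu$'s mass, and arrange $\supp\measure\zeta$ inside that compact set. The boundaries of the corrective cells at the chart overlaps are handled simultaneously, so the final $\alpha$ is globally a cycle. The main obstacle, and where the real work lies, is this last step: constructing the filling $\zeta$ so that it \emph{simultaneously} has small $\vol_n$-mass (to control the action), lives in a prescribed compact region, has its own boundary exactly equal to $\partial\beta$, and introduces no new boundary — essentially a parameterized, quantitative isoperimetric inequality that must be proven chart by chart and then patched, keeping track of orientations and of the bookkeeping that $\measure{\gamma}$ depends on the choice of coordinates on each cell.
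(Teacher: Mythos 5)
Your overall strategy coincides with the paper's: the direction \cyccondition$\ \Rightarrow\ $\holcondition\ is Stokes's theorem, and for the converse you discretize $\mu$ into weighted point masses, replace each by a small affine $n$-cell spanned by the sampled velocities, and then try to close up the resulting chain $\beta$ using \holcondition\ together with an isoperimetric argument, finally truncating $L$ to a compact set to get the action estimate. The paper follows the same outline, except that it first mollifies $\mu$ to a smooth density and organizes the cells inside a sequence of smooth triangulations rather than an ad hoc chart cover with a partition of unity.

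The genuine gap is the closing-up step, which you correctly identify as ``where the real work lies'' but do not supply; it is the content of the proof rather than a patchable detail. Two concrete obstructions. (i) $\mass(\measure{\partial\beta})$ is not small --- it is of order $(\text{cell side})^{-1}$ --- so no isoperimetric or deformation argument applied to $\partial\beta$ as a whole produces a cheap filling; one must first force \emph{exact} cancellation of almost all of the boundary. Since your parallelepipeds at distinct points $x_j$ are spanned by different directions $v^j$, they never share faces exactly, so the cancellation you invoke is only approximate and leaves a residual boundary of large mass. The paper handles this by pairing nearly parallel cells across the common $(d-1)$-faces of adjacent top-dimensional simplices and deforming them slightly so that they glue smoothly (Steps 3--4 of Section \ref{sec:basemeasureapproximation}); only the genuinely unpaired boundary, whose proportion is driven to zero, needs filling. (ii) The remaining filling cannot be extracted from a single flat-norm estimate. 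The paper localizes \holcondition\ by slicing $\mu$ along properly nested simplices down to the $(d-n)$-skeleton (Lemma \ref{lem:boundarycondition}), obtaining a signed balance condition at each $(d-n)$-simplex, and then builds the corrective chain inductively up through the skeleta of $T_k$, invoking the isoperimetric inequality (Proposition \ref{prop:isoperimetric}) once per dimension so that the exponents compound and the added mass --- and hence, after reparameterizing the corrective cells as in Remark \ref{rmk:measurevanishes}, the added measure on $T^nM$ --- vanishes as $k\to\infty$. A Federer-type deformation filling yields an unparameterized current with no control on $\measure{\zeta}$ as a measure on $T^nM$, which is exactly the object the theorem is about; until you produce the parameterized filling with controlled velocities and support, the proof is incomplete at its decisive point.
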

Most of the rest of the paper will be devoted to proving this result.
A probability measure $\mu\in\Mn$ that satisfies Conditions \holcondition\ and \cyccondition\ is said to be \emph{holonomic}. The space of all holonomic measures is convex.

\subsection{Relative holonomic measures}
\label{sec:relative}

Since our proof of Theorem \ref{thm:holonomic} relies on smooth triangulations (to be defined in Section \ref{sec:triangulations}), it is easy to modify it in order to prove

\begin{thm}\label{thm:relholonomic}
 Assume that $1\leq n\leq d$. Let $\mu\in\Mn$ and $U\subset M$ be a closed set diffeomorphic to a union of simplices of a smooth triangulation of $M$. Then the following conditions are equivalent:
 \begin{enumerate}
 \item\label{it:relholonomicity} For all forms $\omega\in \Omega^{n-1}(M)$ such that $\omega|_U=0$,
 \[\int_{T^nM}d\omega\,d\mu=0.\]
 \item There exists a sequence $\{\alpha_k\}_{k\in\N}$ of chains such that the boundaries $\partial\alpha_k$ are contained in $U$, and such that the induced measures $\measure{\alpha_k}\to\mu$ as $k\to\infty$ in the topology induced by the distance \eqref{eq:metricMn}.
 \end{enumerate}
 Moreover, given any positive mild measure $\mu\in\Mn$ satisfying item \ref{it:relholonomicity}, a continuous function $L\colon T^nM\to \R$, and a positive number $\varepsilon>0$, there is chain $\alpha$ with boundary  contained in $U$ such that 
 \[\left|\int L\,d\mu-\int L\,d\measure{\alpha}\right|<\varepsilon.\]
\end{thm}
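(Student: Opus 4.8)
The plan is to reduce Theorem~\ref{thm:relholonomic} to Theorem~\ref{thm:holonomic} by a doubling-and-collapsing construction, so that no substantially new analysis is required beyond the (to be given) proof of the absolute case. The implication (2)$\implies$(1) is immediate: if $\partial\alpha_k\subset U$ and $\omega|_U=0$, then $\int d\omega\,d\measure{\alpha_k}=\langle\partial T_{\measure{\alpha_k}},\omega\rangle$ depends only on $\omega$ restricted to a neighborhood of $\supp\partial\alpha_k\subset U$ up to an error controlled by $\mathbf M(\measure{\alpha_k}\!-\!\mu)\to0$; passing to the limit gives $\int d\omega\,d\mu=0$. The substance is the reverse direction together with the action-approximation clause.

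For (1)$\implies$(2), I would work directly inside the proof of Theorem~\ref{thm:holonomic} rather than as a black box, since the construction there is built on a smooth triangulation of $M$ and proceeds cell by cell. The key observation is that the condition ``$\int d\omega\,d\mu=0$ for all $\omega$ with $\omega|_U=0$'' says exactly that the boundary current $\partial T_\mu$ is supported on $U$ (as a functional on $(n-1)$-forms, it annihilates every form vanishing on $U$). So one runs the same polygonal-approximation machinery: subdivide $M$ by a fine smooth triangulation $\mathcal T$ refining the given decomposition of $U$; build the approximating chain $\alpha_k$ by the same cell-by-cell scheme that produces $\measure{\alpha_k}\to\mu$; and then check that the cumulative boundary pieces that would, in the absolute case, be cancelled against each other now accumulate only on $U$. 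Concretely, in the absolute case the chain is engineered to be a cycle because the ``holonomicity'' of $\mu$ forces the boundary contributions of adjacent cells to match; here the same matching holds away from $U$, so $\partial\alpha_k$ lies in the union of faces meeting $U$, which by taking $\mathcal T$ fine enough is contained in (a neighborhood deformation-retracting to, hence pushable into) $U$. One must then push $\partial\alpha_k$ into $U$ by a fixed smooth retraction of a neighborhood of $U$, at a cost in mass and action that goes to $0$ as the mesh shrinks.

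The action clause is handled in parallel with the absolute case: since $L$ is continuous and $\mu$ is mild, $L$ can be uniformly approximated on the relevant compact sets of $T^nM$ by functions for which the cell-by-cell construction already guarantees $\int L\,d\measure{\alpha_k}\to\int L\,d\mu$; the mild-measure growth bound (the defining inequality for $\Pn$, applied to $|L|\le C(1+\vol_n)$ on the portion of $T^nM$ actually charged) controls the tail, exactly as in Theorem~\ref{thm:holonomic}. Choosing $\varepsilon>0$ and taking $k$ large then yields the single chain $\alpha=\alpha_k$ with $\partial\alpha\subset U$ and $|\int L\,d\mu-\int L\,d\measure\alpha|<\varepsilon$.

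The main obstacle I anticipate is purely organizational rather than conceptual: making precise that the boundary debris of the approximating chain genuinely lands in $U$ and not merely ``near'' $U$. This requires that the smooth triangulation used in the proof of Theorem~\ref{thm:holonomic} be chosen to restrict to a triangulation of $U$ (possible by hypothesis, since $U$ is a subcomplex of a smooth triangulation of $M$), and that the neighborhood retraction used to absorb the leftover boundary be fixed \emph{before} the mesh is refined, so its Lipschitz constant does not degrade. Once the proof of Theorem~\ref{thm:holonomic} is written with these two compatibility conditions built in, the modifications for Theorem~\ref{thm:relholonomic} are, as claimed in the text, minimal.
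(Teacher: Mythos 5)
Your proposal follows essentially the same route the paper intends: rerun the cell-by-cell construction of Theorem~\ref{thm:holonomic} on triangulations $T_k$ chosen so that $U$ is a subcomplex, note that condition (1) gives the boundary-balance identity (the analogue of Lemma~\ref{lem:boundarycondition}) only for nested simplices away from $U$, and let the uncancelled boundary debris land on the simplices of $U$; the converse direction is Stokes plus weak* convergence, as you say. The only divergence is your added retraction step, which becomes unnecessary once the triangulation restricts to one of $U$, as you yourself observe.
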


\begin{rmk}
The boundaries $\partial \alpha_k$ can either be defined as in singular homology (see for example \cite[\S2.1]{hatcher}), or alternatively one can interpret the condition that $\partial\alpha_k$ be contained in $U$ as meaning that 
\[\int d\omega\,d\measure{\alpha_k}=0\]
for all $\omega\in\Omega^n(M)$ such that $\omega$ vanishes on $U$.
\end{rmk}

A probability measure $\mu\in\Mn$ that satisfies the conditions in Theorem \ref{thm:relholonomic} is said to be \emph{holonomic relative to $U$}. The space of all these measures is again convex.

Another variant that can be proved easily using our methods is 

\begin{thm}\label{thm:relholonomic2}
 Let $1\leq n\leq d$ and $\mu\in\Mn$. Assume that there exists an $(n-1)$-chain $\beta$ such that,  for all $\omega \in\Omega^{n-1}$,
 \[\int d\omega\,d\mu=\int \omega\,d\measure{\beta}.\]
 Assume also that the closure of the image of $\beta$ on $M$ is contained a union of $(d-1)$-dimensional simplices of a smooth triangulation of $M$. 
 Then there exists a sequence of $n$-chains $\{\alpha_k\}_{k\in\N}$ such that $\measure{\alpha_k}\to\mu$, and $\partial \alpha_k=\beta$.
 
 Moreover, given any positive mild measure $\mu\in\Mn$ satisfying the above condition, a continuous function $L\colon T^nM\to \R$, and a positive number $\varepsilon>0$, there is a chain $\alpha$ with $\partial \alpha=\beta$ such that 
 \[\left|\int L\,d\mu-\int L\,d\measure{\alpha}\right|<\varepsilon.\]
\end{thm}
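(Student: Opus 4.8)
The plan is to reduce to Theorem~\ref{thm:holonomic} by peeling off a fixed filling of $\beta$. First note that $\beta$ is necessarily a cycle, as is forced by the desired conclusion $\partial\alpha_k=\beta$ (and at the level of currents this already follows from the hypothesis applied to $\omega=d\sigma$, $\sigma\in\Omega^{n-2}$). Next, for $\omega\in\Omega^{n-1}$ we have $\langle\partial T_\mu,\omega\rangle=\int d\omega\,d\mu=\int\omega\,d\measure\beta=\langle T_{\measure\beta},\omega\rangle$, and $T_{\measure\beta}$ is exactly the current of integration over the singular chain $\beta$ (unwinding the definition of $\measure\beta$, $\int\omega\,d\measure\beta=\int_D\beta^*\omega$). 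Thus the integration current of $\beta$ is a boundary, so $[\beta]=0$ in $H_{n-1}(M;\R)$. Since $M$ is compact and smooth singular homology computes $H_*(M;\R)$, a nullhomologous smooth cycle bounds a smooth chain; using the hypothesis that the image of $\beta$ lies in the $(d-1)$-skeleton of a smooth triangulation of $M$, I would fix once and for all a chain $\eta$ of $n$-cells, in the sense of Section~\ref{sec:setting} (interpreting its cells as singular chains so that $\partial$ makes sense, as in the remark following Theorem~\ref{thm:relholonomic}), with $\partial\eta=\beta$.

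Set $\nu:=\mu-\measure\eta$. This is a signed mild measure, and for every $\omega\in\Omega^{n-1}$,
\[
 \int d\omega\,d\nu=\int d\omega\,d\mu-\int d\omega\,d\measure\eta=\int\omega\,d\measure\beta-\int\omega\,d\measure{\partial\eta}=0,
\]
so $T_\nu$ is closed. The heart of the argument is to run the construction in the proof of Theorem~\ref{thm:holonomic} for the closed \emph{signed} measure $\nu$ in place of a positive closed measure, obtaining a sequence of cycles $\{\gamma_k\}$ with $\measure{\gamma_k}\to\nu$ in the topology of \eqref{eq:metricMn} (extended in the obvious way to signed measures), and, for any continuous $\nu$-integrable $L$ and $\varepsilon>0$, a single cycle $\gamma$ with $|\int L\,d\measure\gamma-\int L\,d\nu|<\varepsilon$. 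That construction should go through with only cosmetic changes: one approximates $\nu$ weak${}^*$ by finitely supported measures (a signed measure being the difference of its positive and negative parts, each of which is so approximated), replaces each atom $(x,v_1,\dots,v_n)$ of weight $m$ by a small cell near $x$ whose differential is close to $(x,v_1,\dots,v_n)$ and whose induced measure has mass $\approx|m|$, taken with coefficient $m$ (equivalently, reversing the cell's orientation when $m<0$), and then cancels the resulting boundary — which lies on the $(d-1)$-skeleton of a fine triangulation — against itself, a step that uses only the closedness of $T_\nu$ together with a small-mass correction, never positivity. The normalization making each induced measure a probability is simply discarded, since neither $\nu$ nor $\mu$ need be probabilities here.

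Finally I would put $\alpha_k:=\eta+\gamma_k$ and $\alpha:=\eta+\gamma$. Since each $\gamma_k$ is a cycle, $\partial\alpha_k=\partial\eta=\beta$, while $\measure{\alpha_k}=\measure\eta+\measure{\gamma_k}\to\measure\eta+\nu=\mu$; likewise $\partial\alpha=\beta$, and because $L$ is $\measure\eta$-integrable (it is $\mu$-integrable and $\measure\eta$ has finite total variation), $|\int L\,d\mu-\int L\,d\measure\alpha|=|\int L\,d\nu-\int L\,d\measure\gamma|<\varepsilon$. The main obstacle is the middle step: verifying that the machinery behind Theorem~\ref{thm:holonomic} — in particular the atomic-approximation step and the quantitative boundary-cancellation — is genuinely robust when passing to signed measures and negatively weighted cells. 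If one prefers to avoid signed measures, the alternative is to rerun the proof of Theorem~\ref{thm:holonomic} directly on the positive measure $\mu$ and arrange that the uncancelled boundary flux on the $(d-1)$-skeleton equals $\measure\beta$ rather than $0$; this is legitimate precisely because the hypothesis $\int d\omega\,d\mu=\int\omega\,d\measure\beta$ prescribes exactly that flux and $\beta$ is supported on the skeleton, just as Condition~\holcondition\ is exploited in Theorems~\ref{thm:holonomic} and~\ref{thm:relholonomic}.
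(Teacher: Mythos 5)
Your reduction $\nu:=\mu-\measure\eta$ is clean as bookkeeping, but it shifts the entire burden onto a claim that is not available: a version of Theorem \ref{thm:holonomic} for \emph{signed} closed mild measures. Theorem \ref{thm:holonomic} is stated and proved only for positive $\mu$, and positivity is used essentially, not cosmetically, throughout its proof: the base measure $\base_k$ is built from $\bar\rho_k=\inf_y\rho(y,\cdot)$ and satisfies $0\leq\base_k\leq\mu$; the approximation $\baseapprox_k$ is obtained by \emph{sampling} the positive density $\bar\rho_k\lebesgue$ with positive weights $d_i^V=|D_i^V|$; the pairing/gluing of cells across faces (Step 3, Remark \ref{rmk:pairings}) relies on the density being nearly equal on adjacent simplices, whereas for a signed density nearby cells of opposite sign would have to be cancelled rather than glued; the closing-up construction takes $r_i^k\in\R_+$ in U\ref{U:pointsonboundary}--U\ref{U:isofrightmass}; and the final estimates control $\mass$, which for signed combinations of cells no longer dominates total variation, so mass convergence plus weak* convergence would not yield the $\int L$ statement. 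Your description of the mechanism you propose to adapt (``replace each atom by a small cell, then cancel the boundary using closedness and a small-mass correction'') is not the construction actually used, and the ``small-mass correction'' is precisely the hard quantitative content of Sections \ref{sec:boundary}--\ref{sec:isoperimetric}. You correctly flag this as ``the main obstacle,'' but flagging it does not close it: as written, the middle step of your argument is neither a citation of an existing result nor a proof.

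Your closing sentence, by contrast, is the intended route: rerun the proof of Theorem \ref{thm:holonomic} on the positive measure $\mu$ itself, choosing the triangulations $T_k$ so that the closure of the image of $\beta$ lies in their $(d-1)$-skeleta (this is exactly what the hypothesis on $\beta$ is for, mirroring the role of $U$ in Theorem \ref{thm:relholonomic}), and in Lemma \ref{lem:boundarycondition} and conditions U\ref{U:respectsboundarycondition}, E\ref{E:boundaryinheritance}--E\ref{E:iscycle} prescribe the boundary flux through the skeleton to be the one induced by $\measure\beta$ rather than zero, so that the inductive closing-up terminates with $\partial\finalchain=\beta$ instead of $\partial\finalchain=0$. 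That is the modification the paper has in mind; your surrounding observations (that $\beta$ is a nullhomologous cycle, and the final assembly of the error terms) are fine, but the proof needs to be organized around this second route rather than the signed-measure detour.
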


\section{Applications}
\label{sec:applications}
This section presents some examples that illustrate the usefulness of Theorem \ref{thm:holonomic}. For simplicity, we do not push them to the greatest possible generality. Other applications can be found in \cite{myvariationalstructure}.

\subsection{Integrability of tangent subbundles on the torus}

\newcommand{\T}{{\mathbb T}}

Consider the case when the manifold $M$ is the $d$-dimensional torus $\T^d=\R^d/\Z^d$ with the flat metric $g$.

As a consequence of Theorem \ref{thm:holonomic}, we have a result inspired by those of Bangert-Cui \cite[Section 6]{bangertcui}:
\begin{coro}
Let $X_1,\dots,X_n$ be smooth vector fields on $\T^d$ that define a subbundle of $T\T^d$ (i.e., $X_1,\dots,X_n$ are linearly independent at each point of $\T^d$). Then there exists a foliation of $\T^d$ with $n$-dimensional leaves if, and only if, there exists a holonomic measure $\mu$ on $T^n\T^d$ supported on the points $(x,X_1(x),\dots,X_n(x))$ for $x\in \T^d$.
\end{coro}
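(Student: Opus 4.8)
The forward direction is the easy one: if there is a foliation of $\T^d$ with $n$-dimensional leaves tangent to $\vspan(X_1,\dots,X_n)$, I would produce a holonomic measure supported on $\{(x,X_1(x),\dots,X_n(x)):x\in\T^d\}$ by averaging the parameterized leaves. Concretely, one expects to use a transverse invariant measure (or, for the torus, a clever averaging over translates, as in Bangert–Cui) to build a measure $\mu$ on $T^n\T^d$ whose pushforward to $\T^d$ is Lebesgue, that is supported where prescribed, and that satisfies \holcondition. The point is that integrating $d\omega$ against leafwise $n$-dimensional volume gives, by Stokes on each leaf, only boundary contributions, and these cancel because the leaves are either closed or are pieces whose boundaries cancel under the transverse averaging; this is exactly \holcondition, so by Theorem \ref{thm:holonomic} such a $\mu$ is holonomic. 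There is a small normalization issue (rescaling the $X_i$ so that the associated volume form integrates to a probability) which is harmless since holonomy is preserved under the obvious rescaling of the fiber coordinates.

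For the converse — the substantive direction — suppose we are given a holonomic $\mu$ supported on the graph-like set $\Sigma=\{(x,X_1(x),\dots,X_n(x))\}$. The key observation is that $\mu$ is then completely determined by its projection $\bar\mu=\pi_*\mu$ on $\T^d$, since $\Sigma$ is a section of $T^n\T^d\to\T^d$; write $X=X_1\wedge\cdots\wedge X_n$ for the associated decomposable $n$-vector field. Holonomicity (Condition \holcondition) says $\int_{\T^d}\langle d\omega,X\rangle\,d\bar\mu=0$ for all $\omega\in\Omega^{n-1}$, i.e.\ the current $T_\mu$ given by $\omega\mapsto\int\langle\omega,X\rangle\,d\bar\mu$ is a cycle. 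I would then argue that, on the torus, such a positive current carried by a fixed continuous plane field forces that plane field to be integrable: the plan is to contradict non-integrability via the Frobenius criterion. If $\vspan(X_1,\dots,X_n)$ is not involutive at some point $x_0$, then some bracket $[X_i,X_j]$ has a component transverse to the span on an open set $V$; dually, there is an $(n+1)$-form, or more precisely an $(n-1)$-form $\omega$ supported near $x_0$, for which $d\omega$ pairs with $X$ with a strict sign on $V\cap\supp\bar\mu$ and with controlled sign elsewhere, contradicting $\int\langle d\omega,X\rangle\,d\bar\mu=0$ once we know $\bar\mu$ charges a neighborhood of $x_0$. Establishing that last point — that $\supp\bar\mu$ contains an open set near any putative non-involutivity point, or more robustly, choosing $\omega$ adapted to $\supp\bar\mu$ — will require using the approximating cycles $\measure{\alpha_k}$ from \cyccondition: each $\alpha_k$ is a genuine smooth cycle whose tangent planes are close to $\Sigma$, and a smooth singular cycle near an integral plane field must, on the torus, lie close to integral submanifolds; passing to the limit recovers leaves. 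I expect the main obstacle to be precisely this step: translating the weak statement ``a nearby-tangent cycle exists'' into the rigid geometric conclusion ``the distribution integrates to a foliation of all of $\T^d$,'' which is where the flat torus hypothesis (uniform geometry, no curvature obstructions, global coordinates) and an argument in the spirit of \cite[Section 6]{bangertcui} must be invoked to assemble the local integral manifolds into a global foliation rather than merely a lamination.
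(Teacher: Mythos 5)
Your forward direction is essentially the paper's: build the measure by averaging (large pieces of) leaves and use Stokes leafwise to verify \holcondition; the paper's sketch says exactly this (``taking the holonomic measures induced by large pieces of the leaves and closing them up using a small amount of measure''), so that half is fine. For the converse, however, you take a genuinely different route from the paper. The paper's sketch applies the \holcondition$\Rightarrow$\cyccondition\ direction of Theorem \ref{thm:holonomic} to produce cycles $\alpha_k$ with $\measure{\alpha_k}\to\mu$; since $\mu$ is carried by the section $(x,X_1(x),\dots,X_n(x))$, these cycles have tangent planes converging to the distribution, and the claim is that they are ``arbitrarily close to the leaves of a foliation.'' You instead try to prove integrability directly by a Frobenius-type contradiction: non-involutivity at some $x_0$ should produce an $(n-1)$-form $\omega$ with $\int d\omega\,d\mu\neq 0$.

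There are two concrete problems with your converse. First, the sign argument does not work as stated: writing $\omega=f\eta$ for a bump function $f$, one has $d\omega=df\wedge\eta+f\,d\eta$, and the term $df\wedge\eta(X_1,\dots,X_n)$ changes sign with $df$, so you cannot arrange a single sign for $d\omega(X_1,\dots,X_n)$ on a neighborhood. The correct way to exploit \holcondition\ here is integration by parts, which yields the divergence identity of Corollary \ref{coro:frobenius} for the density $\rho$ of $\pi_*\mu$ --- but that derivation needs $\pi_*\mu$ to be a smooth, nowhere-vanishing density, which is exactly what a general holonomic $\mu$ supported on the section need not have. This is the second, more serious gap, which you partly acknowledge: $\supp\pi_*\mu$ can be a proper (even lower-dimensional) subset of $\T^d$, so \holcondition\ gives no information about involutivity off the support, and even involutivity on the support does not assemble into a foliation of all of $\T^d$. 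Your proposed repair --- that the approximating cycles of \cyccondition\ must lie close to integral submanifolds and that these can be globalized --- is precisely the unproved leap that the paper's own two-sentence sketch also makes; you have not closed it, and as written your Frobenius detour adds a step that fails (the pointwise sign argument) without removing the step that is hard (passing from approximate tangency of cycles to a global foliation).
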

\begin{proof}[Sketch of proof]
If we started with a foliation, we would be able to induce a measure by taking the holonomic measures induced by large pieces of the leaves and closing them up using a small amount of measure. On the other hand, if we started with a holonomic measure, we would be able to approximate it using $n$-chains that would be arbitrarily close to the leaves of a foliation.
\end{proof}


We have the following version of the Frobenius Theorem, which is an immediate consequence of Condition \holcondition\ in Theorem \ref{thm:holonomic} and integration by parts.

\begin{coro}\label{coro:frobenius}
A set of smooth vector fields $X_1,\dots, X_n$ linearly independent at each point of $\T^d$ defines a smooth foliation (i.e., the subbundle they determine is integrable) if, and only if, there exists a smooth density $\rho$ on $M$ such that for all multiindices $I$ with $n-1$ entries we have, in local coordinates $(x_1,\dots,x_d)$ on $\T^d$, 
\begin{multline}\label{eq:frobenius}
\sum_{i=1}^d\frac{\partial}{\partial x_i}(\rho \, dx_i\wedge dx_I(X_1,\dots,X_n))=
\\\divergence\begin{pmatrix}\rho\,dx_1\wedge dx_I(X_1,\dots, X_n)\\\vdots\\\rho\,dx_d\wedge dx_I(X_1,\dots, X_n)\end{pmatrix}=0,
\end{multline}
where $dx_I=dx_{i_1}\wedge\cdots \wedge dx_{i_{n-1}}$.
\end{coro}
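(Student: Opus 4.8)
The plan is to obtain Corollary \ref{coro:frobenius} as the combination of the preceding corollary with a single integration by parts that converts Condition \holcondition\ of Theorem \ref{thm:holonomic} into the system \eqref{eq:frobenius}. Write $s\colon\T^d\to T^n\T^d$ for the section $s(x)=(x,X_1(x),\dots,X_n(x))$, so that its image $\Sigma=s(\T^d)$ is precisely the set on which the measures of the preceding corollary live, and note that every positive mild measure supported on $\Sigma$ has the form $s_*(\rho\,\lebesgue)$, where $\lebesgue$ is Lebesgue measure on $\T^d$ and $\rho\ge 0$ --- a function we will ultimately want to take smooth.

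The computational core is the following. Given $\omega\in\Omega^{n-1}(\T^d)$, write $\omega=\sum_I f_I\,dx_I$ with $I$ ranging over increasing multiindices of length $n-1$, $dx_I=dx_{i_1}\wedge\cdots\wedge dx_{i_{n-1}}$, and $f_I\in C^\infty(\T^d)$; then $d\omega=\sum_I\sum_{i=1}^d(\partial f_I/\partial x_i)\,dx_i\wedge dx_I$, and, reading $d\omega$ as a function on $T^n\T^d$ and evaluating along $s$,
\[
\int_{T^n\T^d}d\omega\;d\bigl(s_*(\rho\,\lebesgue)\bigr)=\sum_I\sum_{i=1}^d\int_{\T^d}\frac{\partial f_I}{\partial x_i}\;\rho\;(dx_i\wedge dx_I)(X_1,\dots,X_n)\,dx .
\]
Since $\T^d$ has no boundary, integration by parts rewrites the right-hand side as $-\sum_I\int_{\T^d}f_I\bigl(\sum_{i=1}^d\partial_{x_i}(\rho\,(dx_i\wedge dx_I)(X_1,\dots,X_n))\bigr)\,dx$. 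As the $f_I$ run over all smooth functions independently in $I$, this vanishes for every $\omega$ exactly when \eqref{eq:frobenius} holds for every $I$ (the terms with $i\in I$ being zero, so that the sum over $i=1,\dots,d$ and over $i\notin I$ agree). This is the ``integration by parts'' half of the statement: $s_*(\rho\,\lebesgue)$ satisfies \holcondition\ if and only if $\rho$ solves \eqref{eq:frobenius}.

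It then remains to chain equivalences. By the classical Frobenius theorem $X_1,\dots,X_n$ define a smooth foliation if and only if the subbundle they span is integrable; by the preceding corollary this happens if and only if there is a holonomic measure supported on $\Sigma$; and by Theorem \ref{thm:holonomic} --- the equivalence $\holcondition\Leftrightarrow\cyccondition$ together with the definition of holonomicity --- a positive mild measure supported on $\Sigma$ is, up to normalization, holonomic if and only if it satisfies \holcondition. Composing these with the previous paragraph gives the corollary.

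The step I expect to cause the real difficulty is the regularity of $\rho$: the holonomic measure produced by the closing-up construction underlying the preceding corollary need not be of the form $s_*(\rho\,\lebesgue)$ with $\rho$ smooth --- it may be singular --- and mollifying on $\T^d$ destroys \holcondition\ because $s$ is not locally constant. I would address this by smoothing the leafwise chains $\alpha_k$ before passing to their induced measures $\measure{\alpha_k}$, so that the approximants already lie in $\{s_*(\rho\,\lebesgue):\rho\in C^\infty(\T^d)\}$, and then using the linearity of the constraints \holcondition\ together with the convexity of this set to extract a genuinely nonnegative smooth solution of \eqref{eq:frobenius} whenever a holonomic measure on $\Sigma$ exists at all; alternatively, if one is content to let $\rho$ be a signed smooth density, \eqref{eq:frobenius} becomes a linear equation whose solvability is dual to the absence of the obstruction exhibited in the proof of the preceding corollary.
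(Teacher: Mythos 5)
Your proposal follows exactly the route the paper intends: the paper offers no proof beyond declaring the corollary ``an immediate consequence of Condition \holcondition\ in Theorem \ref{thm:holonomic} and integration by parts,'' and your computation identifying \holcondition\ for $s_*(\rho\,\lebesgue)$ with the system \eqref{eq:frobenius} via integration by parts on the boundaryless torus, chained through the preceding corollary and the equivalence \holcondition $\Leftrightarrow$ \cyccondition, is precisely that argument carried out. The smoothness and positivity of $\rho$ in the forward direction, which you rightly single out as the genuine difficulty, is not addressed by the paper at all, so your treatment is if anything more careful than the source.
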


\begin{rmk}
Corollary \ref{coro:frobenius} is a version of the Frobenius Theorem because  it relates the integrability of the subbundle to a condition on the commutators $[X_i,X_j]$ of the vector fields.

For example, in the $n=2$ case equation \eqref{eq:frobenius} easily reduces to
\begin{equation}\label{eq:frobenius2dim}
[X_1,X_2]=\frac{\divergence(\rho X_2)}\rho X_1-\frac{\divergence(\rho X_1)}\rho X_2
\end{equation}
or in the $n=3$ case we have, for $k=1,2,\dots,d$ and denoting $X_i=(X_{i1},X_{i2},\dots,X_{id})$,
\begin{multline*}
X_{1k}[X_2,X_3]+X_{2k}[X_3,X_1]+X_{3k}[X_1,X_2]=\\
\frac{\divergence(\rho(X_{3k}X_2-X_{2k}X_3))}\rho X_1 +
\frac{\divergence(\rho(X_{3k}X_1-X_{1k}X_3))}\rho X_2 +\\
\frac{\divergence(\rho(X_{1k}X_2-X_{2k}X_1))}\rho X_3.
\end{multline*}

The condition in the original Frobenius Theorem is that for all $i$ and $j$ the commutator $[X_i,X_j]$ must be in the subspace spanned by the vector fields $X_1,X_2,\dots,X_n$. Our version makes this requirement more precise because it gives a formula in terms of $\rho$ for the coefficients of $X_1,\dots,X_n$ in the linear combination corresponding to each $[X_i,X_j]$.
\end{rmk}

\subsection{Pseudoholomorphic foliations on the 4-dimensional torus}

Let $M=\mathbb T^4=\R^4/\Z^4$. The existence of pseudoholomorphic foliations on $M$ is not so well understood; as far as we know, there are only the results of Ansorge's thesis \cite{ansorge}, relying on a result of Bangert \cite{bangertexistence}. In this section we will show how to construct some of them using the results of the previous section.

Recall that an \emph{almost-complex structure} $J$ on $M$ is a smooth vector bundle isomorphism $J\colon TM\to TM$ with $J^2=-1$. 

A mapping $F\colon S\to M$ from a Riemann surface $S$ without boundary is a \emph{pseudoholomorphic curve} if $F$ is smooth and 
\[dF\circ i=J\circ dF.\]
If $M$ can be written as a disjoint union of the images $F(S)$ of pseudoholomorphic curves $F$, then the corresponding set of pseudoholomorphic curves constitutes a \emph{pseudoholomorphic foliation} of $M$. Note that this is equivalent to having a 2-dimensional foliation whose tangent subbundle in $TM$ is invariant under the action of $J$.

Let $X$ be a smooth, non-vanishing vector field on $M$. We will look for pseudoholomorphic foliations with an associated holonomic measure $\mu$ equal to 
\[\mu=\rho \,\delta_{(X, JX)},\]
where $\rho=\rho(x)$ is a smooth, nowhere-vanishing probability on $M$. In other words, we want the target space to the foliation to be spanned by $X$ and $JX$.

We choose local coordinates $(x_1,x_2,x_3,x_4)$ such that 
\[X=e_1=(1,0,0,0).\] 
Let $B=\rho J$, $B_1=Be_1$. Then the integrabilty condition \eqref{eq:frobenius2dim} becomes
\[[e_1,B_1]=\divergence(B_1)e_1,\]
which we can rewrite as
\[\frac{\partial (B_1)_2}{\partial x_2}+\frac{\partial (B_1)_3}{\partial x_3}+\frac{\partial (B_1)_4}{\partial x_4}=0,\quad \frac{\partial (B_1)_2}{\partial x_1}=\frac{\partial (B_1)_3}{\partial x_1}=\frac{\partial (B_1)_4}{\partial x_1}=0.\]

Thus, we have a $J$-pseudoholomorphic foliation of $M$ for every vector field $X$ such that the vector field $\proj_{X^\perp}(\rho JX)$ that is the projection (with respect to the standard metric induced by the local coordinates $x_1,x_2,x_3,x_4$) of $\rho JX$ onto the hyperplane perpendicular to $X$ is a vector field that is constant with respect to the flow of $X$ and has vanishing divergence. Thus one can work backwards: by choosing vector fields $X$ and $Y$ such that $\proj_{X^\perp}(Y)$ is constant with respect to the flow of $X$ and has vanishing divergence, one can then define $J$ and $\rho$ such that $Y=\rho JX$ and $J^2=-1$, to obtain a $J$-pseudoholomorphic foliation.

\section{Proof}
\label{sec:proof}
This section is devoted to the proof of Theorem \ref{thm:holonomic}, which will be given in Section \ref{sec:conclusion}.

The idea of the proof is the following. The fact that Condition \cyccondition\ implies Condition \holcondition\ is an easy consequence of Stokes's theorem, so we concentrate in the other implication. 

We start with a positive measure $\mu$ that satisfies Condition \holcondition. We prove in Section \ref{sec:smoothing} that we may assume that the measure $\mu$ is a smooth density.
In Section \ref{sec:triangulations} we specify a family of triangulations $T_k$ on $M$ for $k\in \N$.
Then in Section \ref{sec:basemeasureconstruction} we construct `base measures' $\base_k$, which are approximations to our smooth density that are (in a sense) constant on each simplex of $T_k$; this is analogous to approximating a smooth function on $\R$ with simple functions. In Section \ref{sec:basemeasureapproximation} we construct an $n$-chain $\baseapprox_k$ that is again (in a sense) almost constant on each simplex of $T_k$. This chain, however, is in general not a cycle.

In Section \ref{sec:boundary} we derive a condition on the $(d-n)$-dimensional skeleton of $T_k$ that in Section \ref{sec:cyclesconstruction} allows us to construct cycles that contain the chains $\baseapprox_k$, and whose mass $\mass$ can be estimated. We work on the estimates for the mass in Section 
\ref{sec:isoperimetric}. Finally, we put everything together in Section \ref{sec:conclusion}.


\subsection{Smoothing}
\label{sec:smoothing}
\begin{lem}\label{lem:smoothing}
Any measure $\mu$ in $\Mn$ can be approximated arbitrarily well (with respect to the metric \eqref{eq:metricMn}) using a smooth density on $T^nM$. If $\mu$ is a probability measure that satisfies Condition \holcondition\ then it can be approximated by smooth probability densities that also satisfy Condition \holcondition.
\end{lem}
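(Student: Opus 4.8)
The plan is to establish Lemma~\ref{lem:smoothing} in two stages: first a purely measure-theoretic mollification on $T^nM$ that destroys nothing essential, then a correction that restores Condition~\holcondition. For the first stage, I would work in local charts: cover $M$ by finitely many trivializing coordinate charts for $TM$, subordinate a smooth partition of unity $\{\chi_j\}$ to them, and write $\mu=\sum_j \chi_j\mu$. In each chart $T^nM$ looks like $\R^d\times(\R^d)^n$, so I can convolve $\chi_j\mu$ with a smooth approximate identity $\phi_\delta$ on this Euclidean space. The resulting measure $\mu_\delta$ has a smooth density; because $\vol_n$ grows only polynomially and $\mu$ is mild, the truncation/tail arguments needed to see that $\int f\,d\mu_\delta\to\int f\,d\mu$ for every $f\in\Pn$ (and in particular $\mass(\mu_\delta)\to\mass(\mu)$) are routine. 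Patching the charts back together via the partition of unity gives a global smooth density on $T^nM$ converging to $\mu$ in the metric~\eqref{eq:metricMn}. If $\mu$ is a probability, rescaling by $1/\mass$-type normalization (here by total mass, which tends to $1$) keeps it a probability in the limit; strictly one renormalizes $\mu_\delta$ to be a probability, which only perturbs things by $o(1)$.

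The substantive point is the second stage: the convolution does not preserve $\int d\omega\,d\mu=0$. Here the key observation is that Condition~\holcondition\ says exactly that the current $T_\mu$ is a cycle, i.e. $\partial T_\mu=0$, and the current $T_{\mu_\delta}$ induced by the mollified measure is (in the appropriate sense) a smoothing of $T_\mu$ that is \emph{close} to being a cycle: $\partial T_{\mu_\delta}$ is a current of small mass, supported near the support of $\partial T_\mu=0$, hence genuinely small. More concretely, for $\omega\in\Omega^{n-1}(M)$ one has $\int d\omega\,d\mu_\delta = \int (\text{mollified }d\omega)\,d\mu + (\text{commutator error})$, and since $d\omega$ mollifies to something whose exterior derivative is controlled and $\mu$ kills exact forms, $\bigl|\int d\omega\,d\mu_\delta\bigr|\le C(\omega)\,\varepsilon(\delta)$ with $\varepsilon(\delta)\to0$. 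So $T_{\mu_\delta}$ is a smooth cycle up to a small, smooth boundary term. I would then invoke the standard fact that a smooth $(n-1)$-form $\theta_\delta$ (the density of $\partial T_{\mu_\delta}$, which is exact-pairing-small) that represents a boundary can be written as $d\eta_\delta$ with $\|\eta_\delta\|$ small — using Hodge theory / the Poincar\'e lemma on the compact $M$, together with the fact that $\partial T_{\mu_\delta}$ is itself a boundary so has no cohomology — and subtract from $\mu_\delta$ a small smooth density whose induced current is $\partial T_{\mu_\delta}$, thereby producing a smooth density $\tilde\mu_\delta$ with $\int d\omega\,d\tilde\mu_\delta=0$ exactly, still converging to $\mu$, and still a probability after renormalization. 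One must check the subtracted density can be taken \emph{positive} to be safely subtracted while keeping $\tilde\mu_\delta\ge0$; since the correction has mass $O(\varepsilon(\delta))$ while $\mu_\delta$ is a fixed smooth density bounded below on the relevant region once $\delta$ is fixed, choosing the parameters in the right order keeps positivity.

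The main obstacle I anticipate is precisely this positivity/compatibility bookkeeping: the mollified density $\mu_\delta$ need not be bounded away from zero (it decays in the fiber directions), so subtracting a correction current supported where $\mu_\delta$ is tiny could violate positivity. The fix is to not insist the correction be literally subtracted pointwise, but rather to add a small positive "buffer" density supported where needed and absorb the exact-form defect into its exactness — or, cleaner, to first replace $\mu$ by $\mu_\delta$ \emph{only on a fixed compact region} of $T^nM$ carrying all but $\varepsilon$ of the mass, where a uniform lower bound is available, and leave a genuinely holonomic tail. Alternatively one can sidestep Hodge theory entirely: since the paper only needs \emph{approximation}, it suffices to note that $\mu_\delta$ satisfies \holcondition\ up to error $\varepsilon(\delta)$, and then appeal to the fact (or prove directly by the same convolution applied to a defining chain) that within the class of smooth positive densities the exact subspace is closed and the $\holcondition$-defect functional is continuous, so a diagonal/perturbation argument yields an exactly-\holcondition\ smooth probability density arbitrarily close to $\mu$. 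I would present the Hodge-correction version as the main line and remark on this softer alternative, since it is the cleanest way to guarantee all three desiderata — smoothness, probability, and \holcondition — simultaneously.
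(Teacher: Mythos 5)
Your first stage is fine, but the second stage — the ``correction'' restoring \holcondition\ — is where your proposal diverges from the paper and where it has real gaps. The paper's proof makes the entire second stage unnecessary by choosing the mollification so that it \emph{commutes with the exterior derivative}: the horizontal smoothing is an average of pushforwards under the flows $\phi^i_s$ of finitely many vector fields spanning each tangent space, and since pullback by a diffeomorphism commutes with $d$, one gets $P_i(d\omega)=d(P_i\omega)$ exactly; the vertical (fiber) convolution fixes $d\omega$ outright because forms are multilinear in the fiber variables and the mollifier is symmetric. Hence $\psi*d\omega=d(\psi*\omega)$ and $\int d\omega\,d(\psi*\mu)=\int d(\psi*\omega)\,d\mu=0$: the smoothed measure satisfies \holcondition\ \emph{exactly}, with no boundary defect to repair. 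Your chart-by-chart convolution with a partition of unity genuinely produces a nonzero commutator (the convolutions in overlapping charts do not agree, and the cutoffs $\chi_j$ sit inside the integral), which is precisely the complication the paper's construction is designed to avoid.

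As for your repair itself: (i) you need the \emph{mass} (or at least the flat norm) of $\partial T_{\mu_\delta}$ to tend to zero in order to fill it with a current $S_\delta$ of small mass, whereas your commutator estimate only gives $|\int d\omega\,d\mu_\delta|\le C(\omega)\varepsilon(\delta)$ for each fixed $\omega$, i.e.\ weak smallness; this is fixable but is asserted rather than proved. (ii) The object you must modify is not the current but the positive measure on $T^nM$; there is a dimension slip in ``subtract a density whose induced current is $\partial T_{\mu_\delta}$'' (that is an $(n-1)$-current), and the positivity worry you raise is an artifact of framing it as subtraction — one should instead \emph{add} a positive measure $\nu_\delta$ on $T^nM$ inducing $-S_\delta$ (possible since every $n$-vector is a positive combination of simple ones), while also checking $\nu_\delta(T^nM)\to0$, not just $\mass(\nu_\delta)\to0$, so that convergence in the metric \eqref{eq:metricMn} survives. (iii) The ``softer alternative'' in your last paragraph is not a proof: knowing that $\mu_\delta$ nearly annihilates exact forms does not by itself produce a nearby smooth positive density that annihilates them exactly — that inference is exactly the quantitative surjectivity/isoperimetric statement you were trying to avoid. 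I would recommend simply adopting the flow-based convolution: it delivers smoothness, positivity, and \holcondition\ in one step, and a renormalization by the total mass (which tends to $1$) handles the probability normalization.
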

\begin{proof}
Denote the exponential map by $\exp_x\colon T_xM\to M$.

A \emph{mollifier} $\psi\in C_c^\infty(\R)$ is a function such that $\psi(x)=\psi(-x)$, $\int\psi=1$, and $\psi\geq 0$. 

Fix a set of smooth vector fields $F_1,F_2,\dots,F_\ell$ on $M$ such that for each $x\in M$ the vectors $F_1(x),\dots,F_\ell(x)$ span all of $T_xM$. Note that $\ell\geq d=\dim M$. 

Denote by $\phi^i\colon M\times \R\to M$ the \emph{flow} of $F_i$:
\[\phi^i_0(x)=0,\quad\frac{d\phi^i_s(x)}{ds}=F_i(\phi^i_s(x)),\quad s\in \R.\]
For fixed $s\in \R$, denote the derivative of the diffeomorphism $\phi^i_s$ by 
\[d\phi_s^i\colon TM\to TM.\] 
Extend it to a map $d\phi^i_s\colon T^nM\to T^nM$ by setting
\[d\phi^i_s(x,v_1,v_2,\dots,v_n)=(\phi^i_s(x),d\phi^i_s v_1,\dots,d\phi^i_s v_n).\]

For $f\in \Cinfc$, we will denote by $P_i(f)$ the function given by
\[P_i(f)(x,v_1,v_2,\dots,v_n)=\int_\R f\circ d\phi_s^i(x,v_1,\dots,v_n)\,\psi(s)\,ds.\]
This is a convolution in the horizontal direction $F_i$.
Also, for $f\in \Cinfc$ we let $V(f)$ be the convolution in the vertical direction,
\begin{align*}
 V(f)(x,v_1,\dots,v_n)&=\int_{T_xM}dw_1\psi(|w_1-v_1|)\int_{T_xM}dw_2\psi(|w_2-v_2|)\\
 &\cdots \int_{T_xM} dw_n\psi(|w_n-v_n|)f(x,w_1,w_2,\dots,w_n).
\end{align*}

For $f\in \Cinfc$, we will denote
\[\psi*f=P_1P_2\cdots P_\ell V(f).\]
Note that $\psi*f$ is a $C^\infty$ function even if $f$ is only measurable. Moreover, if the diameter of the support of $\psi$ is sufficiently small, and if $f$ is an exact form on $M$, i.e.~$f(x,v_1,\dots,v_n)=d\omega_x(v_1,\dots,v_n)$ for some $\omega\in\Omega^{n-1}(M)$, then $\psi*d\omega$ is the exact form $d(\psi*\omega)$. To see this, note first that by linearity of $\omega$ on each entry $V(d\omega)=d\omega$. Also, for $s$ small enough, $\phi^*_s$ is a diffeomorphism and hence
\[P_i(d\omega)=\int \psi(s)\phi_s^{i*}d\omega\,ds=d\left[\int\psi(s)\phi_s^{i*}\omega\,ds\right]=d(P_i\omega).\]

Now let $\mu$ be a probability measure on $T^nM$. 
We define the convolution $\psi * \mu$ by duality, setting
\[\int_{T^nM} f\,d(\psi*\mu)=\int_{T^nM}(\psi*f)\,d\mu.\]
Then $\psi*\mu$ is a smooth density (see for example \cite[\S5.2]{friedlander}), and in the topology of $\Mn$,
\[\psi*\mu\to\mu\quad\textrm{as}\quad\diam\supp\psi\to0.\]
Also, if $\mu$ satisfies Condition \holcondition, then
\[\int_{T^nM}d\omega\,d(\psi*\mu)=\int_{T^nM}d(\psi*\omega)d\mu=0,\]
so $\psi*\mu$ also satisfies Condition \holcondition.
\end{proof}

\subsection{Triangulations}
\label{sec:triangulations}
A triangulation $T=(K,h)$ of $M$ is a simplicial complex $K$ homeomorphic to $M$ together with a homeomorphism $h:K\to M$. When talking about such a triangulation $T$, we will speak indistinctly of a simplex $U\subseteq K$ and of its image $h(U)\subseteq M$. In other words, we will ignore $K$ as a topological space, and we will instead think of the triangulation as being `drawn' directly on $M$. 



We say that a triangulation $T$ on the manifold $M$ is \emph{smooth} if each $d$-dimensional simplex in $T$ is the image of the standard $d$-dimensional simplex 
\[\{(x_1,\dots,x_{d+1})\in \R^{d+1}:x_i\geq0,x_1+\dots+x_{d+1}=1\}\]
under a smooth map.

We fix a sequence of smooth triangulations $\{T_k\}_{k\in\N}$ on $M$ such that:
\begin{enumerate}[T1.] 
 \item\label{it:refinement}(Successive refinements) For $k>1$, $T_k$ is a refinement of $T_{k-1}$. 
 
 For each simplex $V$ in $T_k$, $k\geq1$, we denote by $U(V)$ the simplex of dimension $d$ of $T_1$ in which $V$ is contained. (This is ambiguous for the simplices of dimension less than $d$, but any choice will work, so we assume that this choice has been made for each simplex $V$ once and for all.)

 \item\label{it:finitesimplices}(Finite) $T_k$ has finitely many simplices.
 \item\label{it:charted}(Charted) For each simplex $U$ of dimension $d$ of $T_1$, there is a chart $\varphi_U:N_U\subseteq M\to\R^d$ (for $N_U$ some neighborhood of $U$) such that the image $\varphi_U(U)$ is the standard simplex with vertices at the origin and at the vectors of the standard basis of $\R^d$.

  For brevity, we will denote $\varphi_{U(V)}$ by $\varphi_{V}$ for all simplices $V$ in the triangulations $T_k$, $k\geq1$.

 \item\label{it:linearity}(Affine) For every simplex $V$ in $T_k$, $\varphi_{V}(V)$ is contained in a translate  of a vector space $Y(V)\subset\R^d$ of dimension $\dim V$. 
 \item\label{it:nondegeneracy} (Nondegeneracy) All simplices of $T_k$ are non-degenerate. In other words, if a simplex $V$ has dimension $m$, then also
 \[\vol_mV>0.\]
 \item\label{it:vanishingdiameter}(Vanishing diameter) \[\lim_{k\to\infty}\diam T_k=0.\]
\end{enumerate}

Existence of triangulations on manifolds is discussed in great detail for example in \cite{munkres}. A triangulation $T_1$ satisfying T\ref{it:finitesimplices}--T\ref{it:nondegeneracy} 
always exists. To obtain all other refinements $T_k$ of $T_1$, one successively refines the standard simplex $\varphi_U(U)$ (for $U$ a simplex in $T_1$) making sure that the rules T\ref{it:finitesimplices}--T\ref{it:nondegeneracy} 
are respected every time. It can be seen by induction on $k$ that this is possible. One can take a refinement that respects T\ref{it:finitesimplices}--T\ref{it:nondegeneracy}. 
Ensuring overall compliance with T\ref{it:vanishingdiameter} is 
easy.  Then one pulls the resulting triangulation back to $M$ using the charts $\varphi_U$.

We will denote by $E_{m}^k$ the $m$-dimensional skeleton of the triangulation $T_k$.

\subsection{The base measure and its approximation}
\label{sec:basemeasure}
\subsubsection{Construction of the base measure}
\label{sec:basemeasureconstruction}
In Section \ref{sec:triangulations} we specified the triangulations $T_k$, $k\in\N$, and we introduced the notation $\varphi_V$.

Let $\mu$ be a smooth density in $\Mn$. We will define \emph{base measures} $0\leq \base_k\leq\mu$ depending on the triangulations $T_k$ such that $\base_k\to\mu$ as $k\to\infty$. 
Roughly speaking, the measure $\base_k$ is the largest density, constant on a constant section of $T^nM$ in the interior of each $d$-dimensional simplex $U$ of $T_k$.
Our goal here is \emph{not} to produce measures that satisfy Condition \holcondition.

For a simplex $V$ of dimension $d$ in the triangulation $T_k$, we take the chart $\varphi_V$ and extend it to a trivialization of $T^nM$, $d\varphi_V\colon T^nM\to\R^{d(n+1)}$, by setting
\[d\varphi_V(x,v_1,v_2,\dots,v_n)=\left(\varphi_V(x),d\varphi_V(v_1),\dots,d\varphi_V(v_n)\right).\]
Let $\lebesgue$ denote Lebesgue measure on $\R^{d(n+1)}$ and let $\rho$ be the Radon-Nikodym derivative of the pushforward measure $(d\varphi_V)_*\mu=\rho\lebesgue$ on $\R^{d(n+1)}$.

For $(x,v)\in \R^{d(n+1)}$ with $x\in\varphi_V(V)$, we let
\[\bar\rho_k(x,v)=\inf_{y\in\varphi_V(V)}\rho(y,v).\]
Note that $v$ is the same on both sides of the equation, and the dependence of the right-hand-side on $x$ comes from the choice of $V$. Also, this is ambiguous when $x$ lies in a simplex of dimension $<d$. This ambiguity happens only on a set of $\lebesgue$-measure zero, so we may just ignore it, as it will not affect the rest of our argument.
We let 
\[\base_k|_{T^nV}=d\varphi_V^*(\bar\rho_k\lebesgue).\]
This completely determines $\base_k$ on the whole bundle $T^nM$.
Also, $\rho_k\to\rho$ uniformly on compact sets, because $\rho$ is smooth and $\diam T_k\to0$ by T\ref{it:vanishingdiameter}. Similarly, $\mass(\base_k-\mu)\to0$. Hence $\dist_\Mn(\base_k,\mu)\to0$ as $k\to\infty$.
\subsubsection{Construction of the approximation}
\label{sec:basemeasureapproximation}
For each $k\in\N$, we will construct a chain $\baseapprox_k$ whose induced measure $\measure{\baseapprox_k}$ will approximate the base measure $\base_k$ very well. We do this in the following steps.

\paragraph{Step 1.} On each $d$-dimensional simplex $V$ of $T_k$, we sample the distribution $\bar\rho_k\lebesgue$ to get a finite sequence of points $p_1^V,\dots,p_{\ell_V}^V\in \R^{d(n+1)}$. We may assume that the following conditions are true for these points:
\begin{enumerate}[{A}1.]
 \item\label{it:pointsininterior} Each point $p_i^V$ is in the interior of $\varphi_V(V)$.
 \item\label{it:transversality} Write $p_i^V$ as $(x,v_1,\dots,v_n)\in \R^d\times\cdots\times\R^d=(\R^d)^{n+1}$. Let $\Pi^V_i$ be the plane 
 \[\Pi_{i}^V=\{x+t_1v_1+t_2v_2+\cdots+t_nv_n:t_i\in\R\}\subseteq\R^{d(n+1)}.\]
 We assume that $\Pi_i^V$ intersects all the simplices $W\subseteq \partial \varphi_V(V)$ of dimension $\dim W\geq d-n$ transversally.
 \item\label{it:weights}
 With weights $d_i^V>0$ that will be determined in Step 2, we assume that the measure 
 \begin{equation}\label{eq:deltaapprox}
\frac1{Z}\sum_{V\subset E^k_{d}}\sum_id_i^V\varphi_V^*\delta_{p_i^V},
\end{equation}
(where the sum is taken over all the $d$-dimensional simplices $V$ in the triangulation $T_k$, and $Z=\sum d^V_i$ is a normalization constant)
is a good approximation of $\base_k$, in the sense that the distance \eqref{eq:metricMn} between them tends to 0 as $k\to\infty$.
\item\label{it:densityassumption}  
 We assume that the sample $\{p_i^V\}_{i,V}$ is dense enough, in a way that will be determined in Remark \ref{rmk:pairings}.
\end{enumerate}

\paragraph{Step 2.} 
Let $V$ be a $d$-dimensional simplex in $T_k$ with respect to Lebesgue measure in the coordinates it is endowed with.
Let $\gamma_i^V\colon D_i^V\subseteq\R^n\to\R^d$ be the solution to the equations
 \begin{equation}\label{eq:diffeqforbase}
 \gamma_i^V(0,0,\dots,0)=x,\quad\frac{\partial\gamma_i^V}{\partial t_j}=v_j,\quad i=1,\dots,n.
 \end{equation}
Assume that the domain $D_{i}^V$ of $\gamma_i^V$ is the largest closed subset of $\R^n$ such that $\gamma_i^V$ remains within $\varphi_V(V)$. Note that $\image\gamma^V_i=\gamma^V_i(D^V_i)\subset \Pi^V_i$, so by A\ref{it:transversality} this image also intersects the simplices in the boundary of the standard simplex $\partial(\varphi_V(V))$ transversally.

We let $d_i^V$ be the volume $|D_i^V|$ of the domain of $\gamma^V_i$. With this definition, assumption A\ref{it:weights} can be rephrased as saying that the measure
\[\frac1{Z}\sum_{V\subset E^k_d}\sum_i \measure{(d\varphi_V)^*\gamma^V_i}\]
is a good approximation of $\base_k$. When we consider this last measure,  it is like taking the measure in equation \eqref{eq:deltaapprox}, and spreading the mass of each point along a simplex determined by its velocity vectors $v_1,\dots,v_n$. Since $\base_k|_V$ is `constant' for each such set of velocity vectors, this is in fact a very natural approximation to $\base_k$.

\paragraph{Step 3.}
Let $V$ be a simplex of dimension $d=\dim M$ in $T_k$. Let $V_1,\dots, V_\ell$ be the $d$-dimensional simplices adjacent to $V$. We may assume that each of them shares a single $(d-1)$-dimensional face $F_j$ with $V$.
For each cell $\gamma_i^V$ and each $1\leq j\leq \ell$, we try to find a cell $\gamma_m^{V_j}$ in the neighboring simplex that is almost parallel to $\gamma_i^V$ and is very close to it on the face $F_j$. 
It will not always be possible to pair up a cell in $V$ with cells in all its neighboring simplices (and in many cases it will be impossible to pair it with any), but we pair as many of them as we can. We do this on all simplices $V$ of dimension $d$. 

To make this precise, for each simplex $V$, let $I_V$ be the set of indices $i$ of the cells $\gamma^V_i$.
We say that a \emph{pairing} of the simplices is a subset $\mathsf P$ of the  union
\[\bigcup_{V,V'} I_V\times I_{V'}\]
running over all distinct $d$-dimensional simplices $V$ and $V'$ that have exactly one simplex of dimension $d-1$ in common, such that if $(i,i')\in I_V\times I_{V'}$ is in $\mathsf P$, then
\begin{itemize}
\item  $(i',i)\in \mathsf P$ as well, 
\item $(i,j)\notin \mathsf P$ for all  $i'\neq j\in I_{V'}$, and 
\item the distance on $T^nM$ of the pullbacks of the derivatives of the cells $\gamma^V_i$ and $\gamma^{V'}_{i'}$ by the charts $d\varphi_V$ and $d\varphi_{V'}$ must be close on the corresponding face $F=V\cap V'$:
\begin{multline}\label{eq:pairingcloseness}
\dist_{T^nM}\big((d\varphi_V)^* d\gamma_i^V\big(D_i^V\big)\cap T^n_{F}M,\\
(d\varphi_{V'})^*d\gamma_{i'}^{V'}\big(D_{i'}^{V'}\big)\cap T^n_{F}M\big)<(\diam T_k)^2.
\end{multline}
\end{itemize}
Here, $\dist_{T^nM}$ denotes the distance
\[\dist_{T^nM}((x,v),(x',v'))=\dist_M(x,x')+\inf_{\gamma}g_x(v,t_\gamma v'),\]
where $\dist_M$ denotes geodesic distance on $M$, the infimum is taken over all smooth curves $\gamma$ on $M$ joining $x$ and $x'$, and $t_\gamma v'$ denotes the parallel transport on $\gamma$ of $v'$ from $T^n_{x'}M$ to $T^n_xM$, done separately on each entry of $v'=(v'_1,\dots,v'_n)$ and using the Levi-Civita connection induced by the Riemannian metric $g$ on $TM$.

(Note that we are not assuming that any of these pairings will result in a closed chain, although this would indeed be the case if all the simplices were paired and glued together.)

\begin{rmk}\label{rmk:pairings}
Since $\mu$ is smooth and since $\diam T_k\to0$ in the $k\to\infty$ limit, $\base_k$ tends to be very similar on the fiber $T^n_xM$ of a point $x$ in $V$ and on the fiber of a point of an adjacent simplex $V_j$. Thus by taking $k$ large, and a sufficiently large sample $\{p^V_i\}_i$, one can pair up a proportion of the simplices that can be made arbitrarily close to being all of them. This is what we mean with assumption A\ref{it:densityassumption}: the samples must be large enough that the ratio of unpaired to paired simplices will tend to 0 as $k\to\infty$.

\end{rmk}

\paragraph{Step 4.}
Taking into account the pairing $\mathsf P$ found in Step 3, we deform the corresponding simplices ever so slightly, so that they will be glued together smoothly. We require that the first and second derivatives of the glued cells coincide throughout the gluing, which should happen precisely within the corresponding face $F=V\cap V'$. Because of condition \eqref{eq:pairingcloseness}, the necessary deformation is extremely small. We will use the same notation $\gamma^V_i$ for the deformed cells. Many of these will be identical to the original ones because they will not be paired to anything.

\paragraph{Step 5.}
We let
\[\baseapprox_k=\frac{1}{Z}\sum_{V\subset E_d^k \ell_V}\sum_i(d\varphi_V)^*\gamma^V_i.\]
  We remark that the cells $\gamma^V_i$ involved are the ones deformed as described in Step 4. The induced measure $\measure{\baseapprox_k}$ is evidently a very good approximation to $\base_k$, in the sense that their distance \eqref{eq:metricMn} vanishes asymptotically.

\subsection{Conditions on the boundary}
\label{sec:boundary}
We say that a sequence of simplices $V_1,\dots V_\ell$ of a triangulation is \emph{properly nested} if $V_i\subset \partial V_{i-1}$ and $\dim V_i=d-i$.

Let $V$ be a simplex in a triangulation $T$ of $M$. For $x$ in $V$, let
\[u_V(x)=\dist(x,\partial V).\]
If the triangulation $T$ is reasonably nice, $u_V$ can then be extended to all of $M$ in such a way that $u_V$ will be smooth on the interiors of the simplices of $\partial V$. In our case, this can be done because the triangulation satisfies T\ref{it:charted}--T\ref{it:nondegeneracy}. There is some ambiguity in the choice of the extension, but it is immaterial in our argument.

Let, for $\varepsilon>0$,
\[u^\varepsilon_V(x) = \left\{\begin{array}{ll}
u_V(x)/\varepsilon,& \textrm{if $|u_V(x)|<\varepsilon$,}\\
-1, & \textrm{if $u_V(x)<-\varepsilon$,}\\
1, &\textrm{if $u_V(x)>\varepsilon$.}
\end{array}\right.
\]
Finally, let $\bar u^\varepsilon_V$ be a smoothed version of $u^\varepsilon_V$, such that the amount of smoothing tends to 0 as $\varepsilon\to0$. This can be obtained, for example, by convolving as in Section \ref{sec:smoothing} and ensuring that one uses mollifiers $\psi$ such that $\diam\supp\psi<\varepsilon^2$.

Let $C=\{V_1\supset\cdots\supset V_n\}\subseteq T_k$ be a set of $n$ properly nested simplices. Observe that the form
\[\omega_\varepsilon=d\bar u_{V_1}^\varepsilon \wedge d\bar u_{V_2}^\varepsilon \wedge \cdots \wedge d\bar u_{V_n}^\varepsilon\]
is exact.

Let $\nu$ be a measure on $T^nM$.
Let $C=\{V_1\supset V_2\supset\cdots\supset V_\ell\}$ be properly nested simplices in some triangulation of $M$. Let 
\[B_\varepsilon(C)=\{x\in M:|u_{V_i}(x)|\leq \varepsilon,i=1,2,\dots,\ell\}. \]
Define the measure $\nu^C$ by
\begin{equation}\label{eq:slicedef}
\int f\,d\nu^C=\lim_{\varepsilon\to0+}\frac{1}{\varepsilon^{\ell}}\int_{B_\varepsilon(C)} f\,d\nu,
\end{equation}
where $f\in \Cinfc$. 

Observe that
\[\lim_{\varepsilon\to0+}\int \omega_\varepsilon\,d\mu=\int du_{V_1}\wedge du_{V_2}\wedge\cdots\wedge du_{V_n}\,d\mu^C.\]
Since the left-hand-side vanishes when $\mu$ satisfies Condition \holcondition, we get

 \begin{lem}\label{lem:boundarycondition}
 If the smooth density $\mu\in\Mn$ satisfies Condition \holcondition, then for every  $k\in\N$ and for every properly nested sequence of simplices $C=\{V_1\supset V_2\supset\cdots\supset V_{n}\}$ of the triangulation $T_k$, we have
  \begin{equation} \label{eq:boundarycondition}
   \int_{T^nM} du_{V_1}\wedge du_{V_2}\wedge\cdots\wedge du_{V_{n}}\,d\mu^C=0.
  \end{equation}
 \end{lem}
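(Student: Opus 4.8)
The plan is to extract Lemma~\ref{lem:boundarycondition} from the two ingredients already assembled just above its statement: the $n$-form $\omega_\varepsilon=d\bar u^\varepsilon_{V_1}\wedge\cdots\wedge d\bar u^\varepsilon_{V_n}$ is exact, and $\int\omega_\varepsilon\,d\mu\to\int du_{V_1}\wedge\cdots\wedge du_{V_n}\,d\mu^C$ as $\varepsilon\to0^+$. For exactness, the point to nail down is that each $\bar u^\varepsilon_{V_i}$ is a genuinely $C^\infty$ function on all of $M$ — this is precisely why the smoothing was built into its construction, since the clipped function $u^\varepsilon_{V_i}$ is only piecewise smooth — so that $\eta_\varepsilon:=\bar u^\varepsilon_{V_1}\,d\bar u^\varepsilon_{V_2}\wedge\cdots\wedge d\bar u^\varepsilon_{V_n}$ is a bona fide element of $\Omega^{n-1}(M)$ with $d\eta_\varepsilon=\omega_\varepsilon$. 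Condition~\holcondition\ then gives $\int_{T^nM}\omega_\varepsilon\,d\mu=0$ for every $\varepsilon>0$, so everything reduces to the limit.

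For the limit, I would write each $\bar u^\varepsilon_{V_i}$ as $\chi(u_{V_i}/\varepsilon)$ modulo a correction supported in a layer of normal width $O(\varepsilon^2)$, where $\chi\colon\R\to[-1,1]$ is a fixed smooth nondecreasing profile equal to the identity near $0$ and equal to $\pm1$ outside $[-1,1]$; thus $d\bar u^\varepsilon_{V_i}=\varepsilon^{-1}\chi'(u_{V_i}/\varepsilon)\,du_{V_i}$ up to an error of size $O(\varepsilon^{-1})$ living on that thin layer. Wedging, $\omega_\varepsilon=\varepsilon^{-n}\bigl(\prod_{i=1}^n\chi'(u_{V_i}/\varepsilon)\bigr)\,du_{V_1}\wedge\cdots\wedge du_{V_n}$ up to terms that are lower order after integration, and the weight $\prod_i\chi'(u_{V_i}/\varepsilon)$ is supported in $B_\varepsilon(C)$. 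Since $\chi'\geq0$ is supported in $[-1,1]$ with $\int_\R\chi'=\chi(1)-\chi(-1)=2$, it carries, per normal direction, the same total mass as $\varepsilon^{-1}\mathbf 1_{\{|u_{V_i}|<\varepsilon\}}$; so a Fubini/coarea computation against the smooth density $\mu$ shows that $\int\omega_\varepsilon\,d\mu$ and $\varepsilon^{-n}\int_{B_\varepsilon(C)}(du_{V_1}\wedge\cdots\wedge du_{V_n})\,d\mu$ have the same limit, which by~\eqref{eq:slicedef} equals $\int du_{V_1}\wedge\cdots\wedge du_{V_n}\,d\mu^C$. The integrand $du_{V_1}\wedge\cdots\wedge du_{V_n}$, as a function on $T^nM$, is only bounded and piecewise continuous rather than lying in $\Cinfc$, but since $\mu$ is a smooth density one may approximate it in $L^1(\mu)$ on $B_\varepsilon(C)$ by functions in $\Cinfc$, or simply observe that~\eqref{eq:slicedef} extends to such $f$. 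As the left-hand side $\int\omega_\varepsilon\,d\mu$ is identically zero, \eqref{eq:boundarycondition} follows.

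The main obstacle is making this concentration argument genuinely rigorous, and three points demand care. First, the $O(\varepsilon^2)$-width smoothing correction to each $d\bar u^\varepsilon_{V_i}$ is of size $O(\varepsilon^{-1})$, so it must be checked uniformly that its contribution is $O(\varepsilon)$ relative to the main term after integrating. Second, one needs the limit in~\eqref{eq:slicedef} to exist for the relevant integrand; this is where smoothness of $\mu$ together with hypotheses~T\ref{it:charted}--T\ref{it:nondegeneracy} enters, guaranteeing $\vol_d B_\varepsilon(C)\sim\varepsilon^n$ and convergence of $\varepsilon^{-n}$ times the restriction of $\mu$ to $B_\varepsilon(C)$. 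Third, one must handle the locus where the hypersurfaces $\{u_{V_i}=0\}$ fail to meet transversally or where some $du_{V_i}$ degenerates: using that $\varphi_V$ straightens the simplices to affine pieces (T\ref{it:linearity}), near a properly nested chain these zero sets become transverse affine subspaces off a set whose contribution is of lower order, which should be negligible in the limit. If one prefers, parts of this can be bypassed by simply \emph{defining} $\int du_{V_1}\wedge\cdots\wedge du_{V_n}\,d\mu^C$ as $\lim_{\varepsilon\to0^+}\int\omega_\varepsilon\,d\mu$ — this limit exists, being identically $0$ under~\holcondition — at the cost only of confirming in Section~\ref{sec:cyclesconstruction} that this is the quantity used there; note also that for the lemma as stated it suffices that the limit be \emph{some} nonzero multiple of $\int du_{V_1}\wedge\cdots\wedge du_{V_n}\,d\mu^C$.
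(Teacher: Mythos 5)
Your proposal follows exactly the paper's argument: the paper likewise observes that $\omega_\varepsilon=d\bar u^\varepsilon_{V_1}\wedge\cdots\wedge d\bar u^\varepsilon_{V_n}$ is exact, invokes Condition \holcondition\ to get $\int\omega_\varepsilon\,d\mu=0$, and then passes to the limit $\varepsilon\to0^+$ to identify this with $\int du_{V_1}\wedge\cdots\wedge du_{V_n}\,d\mu^C$ via the definition \eqref{eq:slicedef}. The only difference is that the paper states the limit identity as an ``observe that'' without proof, whereas you spell out the concentration/coarea details and the points requiring care; your elaboration is consistent with what the paper intends.
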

 
\begin{rmk}
 We will use Lemma \ref{lem:boundarycondition} to guide us on the `reconstruction' of the cycles that approximate $\mu$. To understand the significance of the left-hand-side of \eqref{eq:boundarycondition}, consider the case $n=1$. In this case, if we have a segment $\gamma\colon [a,b]\subset \R\to M$ with $\gamma(a)$ outside $V_1$ and $\gamma(b)$ in the interior of $V_1$, then 
 \[\int_{T^nM}du_{V_1}d\mu_\gamma^{V_1}=\int_a^bdu_{V_1}(\gamma'(t))dt=u(b)-u(a)=1-(-1)=2,\]
 while if $\gamma$ were instead going from the interior of $V_1$ to its exterior we would get $-2$. We observe that the sign gives information about whether the curve $\gamma$ is entering or exiting $V_1$, and \eqref{eq:boundarycondition} can be loosely interpreted to say that there are the same amount of curves going in as going out. In higher dimension, the story is more complicated, but the idea is the same: Lemma \ref{lem:boundarycondition} can be interpreted as a sort of perfect balance between the $n$-chains passing through the boundary of $V_1$ with each different orientation.
\end{rmk}

\subsection{Closing up the approximation to the base measure}
\label{sec:closing}
\subsubsection{Inductive construction of cycles}
\label{sec:cyclesconstruction}
In this section we inductively construct $n$-dimensional cycles $\finalchain$ that contain the chains $\baseapprox_k$ that approximate the base measure $\base_k$. Our starting point will be a measure $\measure{\finalchain^0}$ corresponding to a fictitiuos $n$-chain $\finalchain^0$ that will help us guess what the 0-dimensional intersections of $\finalchain$ with the skeleton $E^k_{d-n}$ should be.

We give the general idea in Figure \ref{img:cycles}.

\begin{figure}
  \centering
  \begin{subfigure}[b]{0.595\textwidth}
    \includegraphics[width=0.833\textwidth]{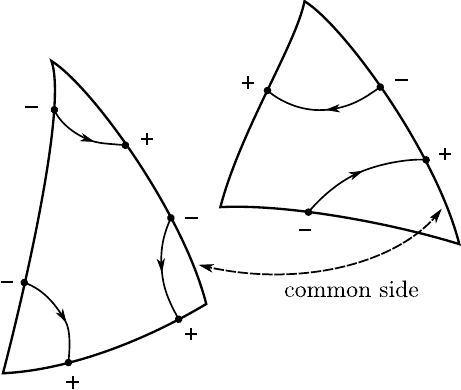}
    \caption{}
    \label{fig:1a}
  \end{subfigure}
  \begin{subfigure}[b]{0.395\textwidth}
    \includegraphics[width=0.75\textwidth]{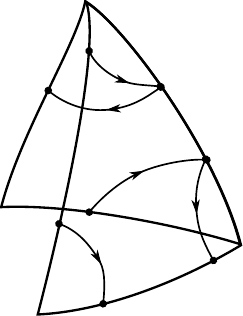}
    \caption{}
    \label{fig:1b}
  \end{subfigure}
  \begin{subfigure}[b]{0.595\textwidth}
    \centering
    \includegraphics[width=0.5\textwidth]{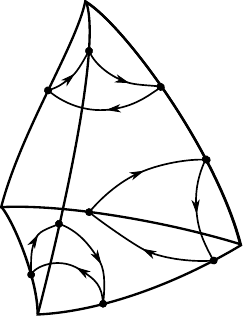}
    \caption{}
    \label{fig:1c}
  \end{subfigure}
  \begin{subfigure}[b]{0.395\textwidth}
    \includegraphics[width=0.75\textwidth]{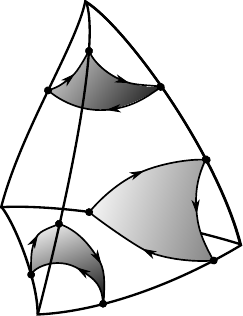}
    \caption{}
    \label{fig:1d}
  \end{subfigure}

  \caption{Construction of cycles in the case $d=3$, $n=2$. For simplicity, we ignore the pieces coming from $\baseapprox_k$.
  (a) On each simplex of dimension $d-n=1$ we have marked points with signs. (These points correspond to the projection onto $M$ of the support $\{p_i^k\}_i$ of the measure $\measure{\finalchain^0}$, and the signs are those of $W(p_i^k,C)$.) We use these signs to add 1-dimensional cells on the corresponding 2-dimensional simplex. Note that from the point of view of two 2-dimensional simplices that have an adjacent edge, the signs are the opposite. (b) Once we put the 2-dimensional simplices together, we see that the corresponding 1-chains $\finalchain^C$ fit together in a way that the corresponding chains have no boundary on the common edge. (c) The same is true for all faces of a simplex of dimension 3, so the resulting 1-chain on its boundary is itself a union of circles. (d) We can thus find a 2-chain that has the 1-chains as boundary. \\
  Similarly, in the next step we get a global 2-chain since the boundary 1-chains will cancel out on the common 2-dimensional faces of the simplices of the triangulation.
  }
  \label{img:cycles}
\end{figure}
%

\paragraph{The 0-dimensional chain.}
Recall that the chain $\baseapprox_k$ was constructed in Section \ref{sec:basemeasureapproximation}. It is a linear combination of $n$-cells $\varphi^*_V\gamma^V_i$. Although these cells originally followed equation \eqref{eq:diffeqforbase}, many of them were deformed to glue them with their paired cells, according to the pairing $\mathsf P$. We are now interested in extending the cells slightly in the directions in which they were \emph{not}  paired up with a neighboring cell. For each $k>0$, we let $\tilde\baseapprox_k$ be the chain that results from extending the domain of the $n$-cell $\gamma^V_i$ to an open set very slightly larger than its original domain $D^V_i$, so that it now intersects the skeleton $E^k_{d-1}$ of $T^k$ on the faces that it was `touching' but that corresponded to directions in which it was not paired up with anything. By property A\ref{it:transversality}, the intersection of the cell with $E^k_{d-1}$ is transversal. Then, for properly-nested simplices $C=\{V_1\supset \cdots \supset V_\ell\}$ the measure $\measure{\tilde\baseapprox_k}^C$ defined in equation \eqref{eq:slicedef} reflects the way the boundary of $\baseapprox_k$ intersects $\partial V_\ell$.

For a point $p$ in $T^nM$ such that $\pi(p)\in V_\ell$, and for a set of $n$ properly nested simplices $C=\{V_1\supset \cdots\supset V_n\}$ let 
\[W(p,C)=du_{V_1}\wedge du_{V_2}\wedge\cdots\wedge du_{V_n}(p),\]
where the functions $u_{V_i}$ are as in Section \ref{sec:boundary}.
Observe that if $C$ and $C'$ are two sets of $n$ properly nested simplices that differ only in the $\ell^{\textrm{th}}$ simplex, $\ell<n$, and the corresponding simplices $V_\ell$ and $V_\ell'$ are adjacent, then
\begin{equation}\label{eq:neighboringWs}
W(p,C)=-W(p,C')
\end{equation}
because $du_{V_\ell}=-du_{V_\ell'}$ at $p$.

For each $k$, we pick a finite set of points $\{p^k_i\}_i\subset T^nM$, and weights $r_i^k\in\R_+$ such that Conditions U\ref{U:pointsonboundary}--U\ref{U:isofrightmass} below are true. 
We want to construct a measure $\measure{\finalchain^0}$ that will capture the way in which our cycles will ultimately intersect the skeleton $E^k_{d-n}$. This measure will be the starting point for the full construction of the cycles. Crucially, at each point in its support $\measure{\finalchain^0}$ carries information about the $k$-dimensional subspace that will eventually turn out to be the intersection of our cycles $\finalchain^n$ with the skeleton $E^k_{d-n}$.
 We will imagine that there is an $n$-chain whose (degenerate) cells are the points $\{\pi(p_i)\}_i\subseteq M$, so that $\eta^0_k$ is given by
 \[\finalchain^0=\sum_ir^k_i \pi(p_i^k),\]
 and parameterized so that 
 \[\measure{\finalchain^0}=\sum_ir_i^k\delta_{p^k_i}.\]
 Strictly speaking, such a chain $\finalchain^0$ does not exist, but the measure $\measure{\finalchain^0}$ does, and this is the object we need.

 The conditions are:
 
\begin{enumerate}[U1.]

 \item \label{U:pointsonboundary}The projection $\pi(p_i^k)$ of each point $p_i^k$ on $M$ is contained in the $(d-n)$-dimensional skeleton $E^k_{d-n}$ of the triangulation $T_k$.
 
 \item \label{U:containsbase}
 We require the points in the support of $\measure{\tilde\baseapprox_k}^C$ to be contained in $\{p_i^k\}_i$, and the corresponding weights $r_i^k$ to be at least as large as the weights these points have in the measure $\measure{\tilde\baseapprox_k}^C$.

 \item \label{U:respectsboundarycondition}
 For each set of $n$ properly nested simplices $C=\{V_1\supset\cdots\supset V_{n}\}\subseteq T^k$,
 \[\sum_iW(p_i^k,C)\,r^k_i=0,\]
 where the sum is taken over all $i$ such that $\pi(p_i)$ is in $V_n$.
 
 \item \label{U:isofrightmass}
  The measure $\measure{\finalchain^0}$ approximates the restriction of $\mu$ to the skeleton $E_{d-n}^k$:
  \[\dist_{\Mn}\left(\sum_C\mu^C,\sum_C\measure{\finalchain^0}^C\right)\leq \frac{1}k\]
 where the sums are taken over all sets $C$ of $n$ properly nested simplices of $T^k$.
\end{enumerate}
The idea is that $\{p^k_i\}_i\cap \pi^{-1}(V_n)$ should be a very good sample of the 
measure 
$\mu^C$.
The set of points and weights can be found as follows. Start with the points in the support of $\measure{\tilde\baseapprox_k}^C$, with the weights they inherit from $\baseapprox_k$. Then by further sampling the measure $\mu^C$, and invoking the fact that it satisfies the conclusion of Lemma \ref{lem:boundarycondition}, a solution for the condition in item U\ref{U:respectsboundarycondition} is guaranteed to exist. Note that the condition in item U\ref{U:respectsboundarycondition} is essentially a rephrasing of the conclusion of Lemma \ref{lem:boundarycondition} adapted to $\measure{\finalchain^0}^C$. Taking a sufficiently large sample of $\mu^C
$, one can also guarantee that item U\ref{U:isofrightmass} will be satisfied.

\paragraph{The higher-dimensional chains.}

For every set of $n+1$ properly nested simplices $C=\{V_1\supset\cdots\supset V_{n+1}\}$, we let $\finalchain^{C}$ denote the 0-dimensional chain 
\[ \finalchain^C=\sum_i (\sgn W(p_i^k,C))r_i^k\pi(p_i^k)\]
where the sum is taken over all indices $i$ such that $p_i^k$ is contained in $V_{n+1}$. 

For every set $C=\{V_1\supset\cdots \supset V_{n-j}\}\subseteq T_k$ of $n-j$ properly nested simplices, $1\leq j< n$, $\tilde\baseapprox_k$ induces an $j$-dimensional chain $\baseapprox^C_k$ on $\partial V_{n-j}$ that satisfies, for all $\omega\in \Omega^{j}(M)$,
 \[
 \int_{\baseapprox^C_k} \omega=
 \int_{T^nM} \omega\wedge du_{V_1}\wedge du_{V_2}\wedge \cdots\wedge du_{V_{n-j}} \, d\measure{\tilde\baseapprox_k}
 \]
Observe that the chain $\baseapprox_k^C$ is in general not unique, but any choice will do for our purposes. We also let $\baseapprox_k^\emptyset=\baseapprox_k$. 

For sets of properly nested simplices 
\[C'=\{V_1\supset \cdots \supset V_{n-j-1}\}\subset C=\{V_1\supset\cdots \supset V_{n-j}\},\] 
we refine the chain $\baseapprox_k^{C'}$ so that each of its $(j+1)$-dimensional cells intersects only one of the $(d-n+j+1)$-dimensional simplices of the boundary $\partial V_{n-j-1}$.
We then let $\bar\baseapprox_k^{C}$ be the part of $\baseapprox_k^{C'}$ that is contained in $V_{n-j}$. In other words,
\[\baseapprox^{C'}_k=\sum_{V\subset\partial V_{n-j}}\bar\baseapprox_k^{C'\cup\{V\}}.\]

We proceed to  construct, inductively on $j=0,1,\dots,n-1$, $(j+1)$-dimensional cycles $\finalchain^{C}$ corresponding to each set of $n-j$ properly nested simplices $C=\{V_1\supset\cdots\supset V_{n-j}\}\subseteq T_k$, such that:
\begin{enumerate}[E1.]
 
 \item \label{E:projectioninskeleton}  The cells of $\finalchain^C$ are contained in $
 V_{n-j}\subseteq E_{d-n+j+1}^k\subseteq M$.
 
  \item \label{E:containsbaseapprox}
 We require that $\bar\baseapprox_k^C$ be contained in $\finalchain^C$, in the sense that all the cells of $\bar\baseapprox_k^C$ appear in $\finalchain^C$ with coefficients of magnitude greater or equal to those they have in $\bar\baseapprox_k^C$. 
 
 If $j=n-1$, $C=\{V_1\}$ and  $\finalchain^C$ contains precisely the cells of $\baseapprox_k$ that are contained in $V_1$, and with exactly the same parameterization for each cell.
 
 \item \label{E:boundaryinheritance} We have
 \[\partial\finalchain^{C} = \sum_{V\subset \partial V_{n-j}}\finalchain^{C\cup \{V\}},\]
 where the sum is taken over all simplices in the boundary of $V_{n-j}$. 
 
 \item \label{E:adjacencycondition}If $C$ and $C'$ are sets of $n-j$ properly nested simplices of $T_k$ that only differ in the $\ell$-th simplex, $1\leq\ell< n-j$, and the corresponding simplices $V_\ell$ and $V_\ell'$ are adjacent, then
 \[\finalchain^C=-\finalchain^{C'}.\]
 This should hold in the sense that the induced functionals on $\Omega^{j+1}(M)$ (i.e., the induced currents) must be equal.
 
 \item \label{E:iscycle} If  $C'=\{V_1\supset\cdots\supset V_{n-j-1}\}\subseteq T_k$ is not empty, 
 \[\sum_{V\subset\partial V_{n-j-1}}\partial\finalchain^{C'\cup \{V\}}=0,\]
 where the sum is taken over all simplices in the boundary of $V_{n-j-1}$. If $C'$ is empty, then the same equation should hold, but now taking the sum over all simplices $V$ of dimension $d$ in $T_k$. 
 
 \item \label{E:massminimizing}The cells of $\finalchain^{C}$ that are not inherited from $\bar\baseapprox_k^C$ are almost $\mass$-mass minimizing, in a sense that will be specified at the end of Section \ref{sec:isoperimetric}. 
 
 
 
 
\end{enumerate}

First we show how to create the 1-chain $\finalchain^C$ corresponding to the case in which $C$ contains $n$ properly nested simplices. We start with $\bar\baseapprox_k^C$, which will provide for compliance with item E\ref{E:containsbaseapprox}. By U\ref{U:containsbase}, the boundary of $\bar\baseapprox_k^C$ is also contained in $\sum_{V\subset \partial V_{n-1}}\finalchain^{C\cup \{V\}}$. So what we do, in order to comply with E\ref{E:projectioninskeleton} and E\ref{E:boundaryinheritance}, is that we connect the remaining dots in $\sum_{V\subset \partial V_{n-1}}\finalchain^{C\cup \{V\}}$ with curves contained in $V_{n-1}$ in the way prescribed by the weights of the dots; because of property U\ref{U:respectsboundarycondition}, this is possible. By taking very short curves, we ensure compliace with E\ref{E:massminimizing}. Because of identity \eqref{eq:neighboringWs}, the construction of $\finalchain^{C\cup \{V\}}$ ($V\subset\partial V_{n-1}$) immediately implies E\ref{E:adjacencycondition}. Property E\ref{E:iscycle} also follows from the identity \eqref{eq:neighboringWs}.

Now assume that we have $\finalchain^C$ for $j=m-1$, and let us construct it for $j=m$, $m> 1$. Let $C=\{V_1\supset\cdots\supset V_{n-m}\}\subseteq T_k$. For each simplex $V\subset \partial V_{n-m}$, we are assuming that there exists $\finalchain^{C\cup V}$ that satisfies E\ref{E:projectioninskeleton}--E\ref{E:massminimizing}. To close these up, we again start with $\bar\beta_C$ (whence complying with E\ref{E:containsbaseapprox}) and we add cells of dimension $m+1$ contained in $V_{n-m}$ (complying with E\ref{E:projectioninskeleton}) so that property E\ref{E:boundaryinheritance} will hold; this is possible because $V_{n-m}$ has trivial homology and because $\sum_{V\subset \partial V_{n-1}}\finalchain^{C\cup \{V\}}$ is a cycle as it satisfies E\ref{E:iscycle}. Properties E\ref{E:adjacencycondition} and E\ref{E:iscycle} for $j=m$ follow from property E\ref{E:adjacencycondition} for $j=m-1$. Compliance with property E\ref{E:massminimizing} can be attained by choosing an almost mass-minimizing set of $(m+1)$-cells.

Write $\finalchain=\finalchain^\emptyset$.
We have proved:
\begin{lem}
There is a sequence of cycles $\finalchain$ that contain $\baseapprox_k$ and such that
\begin{equation}\label{eq:massdifference}
\mass(\measure{\finalchain})-\mass(\measure{\baseapprox_k})
\end{equation}
is almost minimal (in the sense of E\ref{E:massminimizing}), while respecting
\begin{equation}\label{eq:zeroskeletonmeasurecomparison}
\dist_{\Mn}\left(\sum_C\mu^C,\sum_C\measure{\finalchain}^C\right)\leq \frac{1}k,
\end{equation}
where the sums are taken over all sets $C$ of $n$ properly nested simplices of $T^k$. Also, the part of $\measure{\finalchain}^C$ that comes inherited from $\baseapprox_k$ satisfies A\ref{it:densityassumption}.
\end{lem}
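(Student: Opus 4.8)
The plan is to recognize that this lemma simply records the outcome of the inductive construction of Section~\ref{sec:cyclesconstruction}, so that the proof amounts to checking that the assembled chain $\finalchain=\finalchain^\emptyset$ enjoys each of the four stated properties. First I set $\finalchain^\emptyset=\sum_{V}\finalchain^{\{V\}}$, the sum over the $d$-dimensional simplices $V$ of $T_k$ of the top-level chains obtained at the last step ($j=n-1$) of the induction. It is an $n$-chain, and by property E\ref{E:iscycle} in its empty-$C'$ form (equivalently, because E\ref{E:adjacencycondition} makes the contributions of adjacent $d$-simplices cancel on every interior $(d-1)$-face) one has $\partial\finalchain^\emptyset=0$, so $\finalchain$ is a cycle. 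Its containment of $\baseapprox_k$ is the $j=n-1$ clause of E\ref{E:containsbaseapprox}: each $\finalchain^{\{V\}}$ contains exactly the cells of $\baseapprox_k$ supported in $V$, with the same parameterization, and since each such cell lives in a single $d$-simplex these collect to all of $\baseapprox_k=\baseapprox_k^\emptyset$. The density clause is then automatic, as the cells of $\measure{\finalchain}^C$ inherited from $\baseapprox_k$ are literally cells of $\baseapprox_k$, which Step~1 and Step~3 of Section~\ref{sec:basemeasureapproximation} arranged to satisfy A\ref{it:densityassumption} (cf.\ Remark~\ref{rmk:pairings}).

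Next I would prove the slice comparison \eqref{eq:zeroskeletonmeasurecomparison}. For a properly nested $C=\{V_1\supset\cdots\supset V_n\}$ with $\dim V_n=d-n$, the sliced measure $\measure{\finalchain}^C$ of \eqref{eq:slicedef} is determined by how the cells of $\finalchain$ meet the skeleton $E^k_{d-n}$ inside $V_n$; by construction those transverse crossings carry precisely the tangent data of the points $p_i^k$ with the weights $r_i^k$, up to the deformations and short connecting cells introduced during the inductive closing, whose sizes are controlled by $(\diam T_k)^2$ and hence tend to $0$. Thus $\sum_C\measure{\finalchain}^C$ agrees with $\sum_C\measure{\finalchain^0}^C$ to within an error $\to 0$ as $k\to\infty$; combining this with the bound $\dist_{\Mn}(\sum_C\mu^C,\sum_C\measure{\finalchain^0}^C)\le 1/k$ from U\ref{U:isofrightmass} (which I would state with, say, $1/(2k)$ so the extra error can be absorbed) gives \eqref{eq:zeroskeletonmeasurecomparison} by the triangle inequality.

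The remaining and genuinely substantial point is the near-minimality of the mass gap \eqref{eq:massdifference}. At each stage $j$ and each $C=\{V_1\supset\cdots\supset V_{n-j}\}$, the cells of $\finalchain^C$ not inherited from $\bar\baseapprox_k^C$ were, by E\ref{E:massminimizing}, chosen as an almost $\mass$-minimizing $(j+1)$-chain in $V_{n-j}$ with the prescribed boundary $\sum_{V\subset\partial V_{n-j}}\finalchain^{C\cup\{V\}}$; such a chain exists because $V_{n-j}$ has trivial homology and that boundary is a cycle by E\ref{E:iscycle}, a fact that ultimately rests on the balance identity of Lemma~\ref{lem:boundarycondition} as encoded in U\ref{U:respectsboundarycondition}. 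Summing over all $C$ and all $j$, the masses added telescope through E\ref{E:boundaryinheritance} to $\mass(\measure{\finalchain})-\mass(\measure{\baseapprox_k})$, and I would invoke the isoperimetric estimates of Section~\ref{sec:isoperimetric} to bound each local filling by its minimal value plus a controlled error, so that the total falls within the tolerance that E\ref{E:massminimizing} fixes at the end of that section.

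I expect this last step to be the main obstacle: one must verify that the $O(\#\{\text{simplices of }T_k\})$ separate fillings, stacked across $n$ nested levels, do not accumulate error beyond the isoperimetric lower bound — which is precisely the work deferred to Section~\ref{sec:isoperimetric}. By contrast, the cycle property, the containment of $\baseapprox_k$, the density clause, and the slice estimate are all bookkeeping over the construction already carried out above.
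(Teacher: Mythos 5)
Your proposal is correct and follows essentially the same route as the paper, which treats this lemma as the summary of the inductive construction of Section~\ref{sec:cyclesconstruction}: the cycle property from E\ref{E:iscycle} (via the cancellation E\ref{E:adjacencycondition}), containment of $\baseapprox_k$ from E\ref{E:containsbaseapprox} and U\ref{U:containsbase}, the slice estimate \eqref{eq:zeroskeletonmeasurecomparison} from U\ref{U:isofrightmass}, and near-minimality of the mass gap deferred to the isoperimetric estimates of Section~\ref{sec:isoperimetric}. Your only deviation is a slightly more cautious handling of \eqref{eq:zeroskeletonmeasurecomparison} (allowing a small error absorbed by tightening U\ref{U:isofrightmass} to $1/(2k)$), where the paper asserts the identity of the sliced measures outright; this is a harmless refinement, not a different argument.
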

By construction, equation \eqref{eq:zeroskeletonmeasurecomparison} is exactly the same as the condition in U\ref{U:isofrightmass}. 

\subsubsection{Isoperimetric inequality}
\label{sec:isoperimetric}
In this section we want to find an upper bound for the mass difference \eqref{eq:massdifference}.

Recall the isoperimetric inequality:
\begin{prop}[{Federer \cite[\S4.2.10]{federer}, \cite[\S5.3]{morgan}}]
\label{prop:isoperimetric}
There is a constant $\constisoperimetric>1$ such that if $\theta$ is an $m$-chain with $\partial\theta=0$ and contained in a simplex $V$ of some triangulation $T_k$ and of diameter $\diam_VV<1$, then there exists an $(m+1)$-chain $\sigma$ with $\partial\sigma=\theta$ contained in $V$ and with mass bounded by 
\[\mass(\measure{\sigma})\leq\constisoperimetric\mass(\measure{\theta})^{\frac{k+1}k}.\]
\end{prop}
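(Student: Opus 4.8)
The final statement is Proposition~\ref{prop:isoperimetric}, the isoperimetric inequality attributed to Federer and Morgan.

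\paragraph{Proof proposal.}
The plan is to reduce the statement on a simplex $V$ of $T_k$ to the classical isoperimetric inequality for integral (or normal) currents in $\R^d$, which is exactly the content of the cited references. First I would use the chart $\varphi_V$ from property T\ref{it:charted}: since $V$ lies in one of the finitely many $d$-simplices $U$ of $T_1$ and $\varphi_U$ is a fixed smooth chart defined on a neighborhood $N_U$, the pushforward $(\varphi_V)_\ast$ carries the $m$-chain $\theta$ (with $\partial\theta=0$) to an $m$-chain $\varphi_V(\theta)$ supported in the convex set $\varphi_V(V)\subset\R^d$, still a cycle. Because there are only finitely many charts $\varphi_U$ (property T\ref{it:finitesimplices}) and each is a diffeomorphism on a neighborhood of the compact set $\overline{N_U}$, the metric distortion of every $\varphi_V$ is bounded above and below by constants $c_0,c_1>0$ depending only on $T_1$ and the metric $g$; consequently mass in $M$ and mass in $\R^d$ (via $\varphi_V$) are comparable up to these fixed constants, and this comparison is what eventually produces the universal constant $\constisoperimetric$.

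Next I would apply the Euclidean isoperimetric inequality in the convex body $\varphi_V(V)$: for an $m$-cycle $S$ supported in a convex set, there is an $(m+1)$-chain $R$ with $\partial R=S$, supported in the convex hull of $\operatorname{supp}S$ (hence in $\varphi_V(V)$), satisfying $\mass(R)\le c\,\mass(S)^{(m+1)/m}$ with $c$ absolute; this is precisely Federer~\cite[\S4.2.10]{federer} and Morgan~\cite[\S5.3]{morgan}. I would then pull $R$ back by $\varphi_V^{-1}$ to obtain the desired $(m+1)$-chain $\sigma$ in $V\subseteq M$ with $\partial\sigma=\theta$. Tracking the three sources of constants --- the chart distortion $c_0,c_1$, the absolute Euclidean constant $c$, and the exponent $(m+1)/m$ which the statement writes as $(k+1)/k$ (so the ``$k$'' in the exponent is the dimension $m$ of the chain, a mild abuse of notation that I would flag) --- gives a bound $\mass(\measure\sigma)\le \constisoperimetric\,\mass(\measure\theta)^{(m+1)/m}$ with $\constisoperimetric>1$ independent of $k$, $V$, and $\theta$. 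The hypothesis $\diam_V V<1$ guarantees $\mass(\measure\theta)$ is already small enough that one does not need to iterate or rescale, and it also lets us absorb lower-order terms; I would use it to ensure the convex hull stays inside $N_U$ so the pullback is well-defined.

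The main obstacle is a bookkeeping one rather than a conceptual one: the objects here are smooth \emph{singular} chains (finite real linear combinations of smooth cells), not the polyhedral or integral currents in which the classical isoperimetric inequality is usually stated, so I would need to either (i) note that a smooth singular chain induces a normal current of the same mass and that the cited deformation/isoperimetric construction can be realized by a smooth singular chain (which it can, since Federer's argument is by explicit coning and subdivision), or (ii) re-prove the easy version by explicit coning from an interior point of the convex set $\varphi_V(V)$, estimating the cone's volume directly --- the cone over an $m$-cycle $S$ from a point has $(m+1)$-volume at most $\operatorname{diam}(\operatorname{supp}S)\cdot\mass(S)$, and one gets the sharp exponent by first shrinking $S$ via the deformation theorem. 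Either route is routine; the only real care needed is making the constant genuinely uniform over the infinitely many simplices $V$ of all the $T_k$, which is exactly why property T\ref{it:charted} (finitely many fixed ambient charts) was built into the construction of the triangulations.
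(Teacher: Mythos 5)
Your proposal is correct and follows essentially the same route as the paper, which likewise transfers the Euclidean isoperimetric inequality of Federer/Morgan through the charts $\varphi_V$ and obtains a uniform constant from the fact that there are only finitely many such charts, each with compact domain. Your observation that the exponent $\frac{k+1}{k}$ should really read $\frac{m+1}{m}$ (with $m$ the dimension of the chain, as confirmed by the chained exponents $2,3,\dots$ in the estimates of Section~\ref{sec:isoperimetric}) is a correct and worthwhile catch.
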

The original proposition is valid for chains $\theta$ in $\R^d$. It is true as stated because when we pullback a chain from $\R^d$ to $M$ via any of the functions $\varphi_V$, the modulus of continuity of these mappings is globally bounded. This in turn is true because there are only finitely many of them, and they have compact domains.

Let $k\ge1$ and let $V_1$ be a $d$-dimensional simplex in $T_k$. 
Let $k\geq 1$ and let $C$ be a set of properly nested simplices in $T_k$. Decompose the chain $\finalchain^{C}$ into the part of it that comes from $\bar\baseapprox_k^{C}$ and a remainder $\remainder^C_k$,
\[\finalchain^{C}=\bar\baseapprox_k^{C}+\remainder^{C}_k.\]
It follows 
Proposition \ref{prop:isoperimetric} that we can take the cells in $\remainder^C_k$ to be such that, as $k\to\infty$,
\begin{align*}
\mass(\measure{\remainder}^{\{V_1\}}_k)&\leq \constisoperimetric  \sum_{V_2\subset \partial V_1}\mass(\measure{\remainder}^{\{V_1,V_2\}}_k)^2 +\varepsilon^k_2 \\
 &\leq \constisoperimetric^{1+\frac32}  \sum_{V_3\subset \partial V_2}\sum_{V_2\subset \partial V_1}\mass(\measure{\remainder}^{{\{V_1,V_2,V_3\}}}_k)^3 +\varepsilon^k_3
 \\
 &\leq\cdots\leq \constisoperimetric^{q_n} \!\!\! \sum_{V_{n-1}\subset \partial V_{n-2}}\!\!\cdots\sum_{V_2\subset \partial V_1}\mass(\measure{\remainder}^{{\{V_1,V_2,\dots,V_{n-1}\}}}_k)^{n-1}+\varepsilon_{n-1}^{k}\\&\hspace{9.7cm}\to0,
\end{align*}
where $q_n>1$ is some number depending only on $n$, $\varepsilon_k^\ell$ is arbitrarily small (it is the error we may get from not taking exactly the cell provided by Proposition \ref{prop:isoperimetric}, but one with slightly larger mass; we thus specify property E\ref{E:massminimizing} to mean that $\varepsilon^k_\ell\to0$ as $k\to\infty$ for all $\ell$), and the sums are taken over all simplices in the corresponding boundaries. The asymptotic vanishing of the last sum follows from assumptions U\ref{U:containsbase} and U\ref{U:isofrightmass}, from Remark \ref{rmk:pairings}, and from inequality \eqref{eq:zeroskeletonmeasurecomparison}. We conclude 
\begin{lem}\label{lem:massdifferencevanishes}
\[|\mass(\measure{\finalchain})-\mass(\measure{\baseapprox_k})|\to 0\quad\textrm{as}\quad k\to \infty.\]
\end{lem}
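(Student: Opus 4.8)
The goal is to control the mass of the part of $\finalchain$ not already carried by $\baseapprox_k$. By E\ref{E:containsbaseapprox} every cell of $\baseapprox_k$ appears in $\finalchain$ with a coefficient of at least the same magnitude, so setting $\remainder^\emptyset_k:=\finalchain-\baseapprox_k$ and applying the reverse triangle inequality coefficient by coefficient gives
\[
0\le\mass(\measure{\finalchain})-\mass(\measure{\baseapprox_k})\le\mass(\measure{\remainder}^\emptyset_k),
\]
so the problem reduces to proving $\mass(\measure{\remainder}^\emptyset_k)\to0$.

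The first ingredient is the recursive isoperimetric estimate that precedes the statement. Restricting $\remainder^\emptyset_k$ to a $d$-dimensional simplex $V_1$ of $T_k$ exhibits it, by E\ref{E:boundaryinheritance} and E\ref{E:iscycle}, as a filling of the cycle $\sum_{V_2\subset\partial V_1}\remainder^{\{V_1,V_2\}}_k$ on $\partial V_1$; Proposition~\ref{prop:isoperimetric} --- applicable inside $V_1$ because E\ref{E:iscycle} makes that sum a cycle and because $\diam T_k<1$ for $k$ large --- then bounds $\mass(\measure{\remainder}^{\{V_1\}}_k)$ by $\constisoperimetric$ times a power, of exponent $>1$, of $\sum_{V_2}\mass(\measure{\remainder}^{\{V_1,V_2\}}_k)$, up to an error $\varepsilon^k_2$ that E\ref{E:massminimizing} lets us make arbitrarily small. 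Iterating down the tower of properly nested simplices $V_1\supset V_2\supset\cdots$ reproduces exactly the displayed chain of inequalities; summing it over all $d$-dimensional simplices $V_1$ (taking the $\varepsilon^k_\ell$ small enough that their total stays negligible despite the growing number of simplices in $T_k$) and using $\mass(\measure{\remainder}^\emptyset_k)=\sum_{V_1}\mass(\measure{\remainder}^{\{V_1\}}_k)$ yields, for a fixed $n$, a constant $\constisoperimetric^{q_n}$, an exponent $p>1$, and
\[
\mass(\measure{\remainder}^\emptyset_k)\le\constisoperimetric^{q_n}\sum_{C}\mass(\measure{\remainder}^{C}_k)^{p}+o(1),\qquad k\to\infty,
\]
the sum running over the shortest flags $C$ occurring in the recursion.

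The second, and crucial, ingredient is the base case: $\sum_C\mass(\measure{\remainder}^C_k)\to0$. The pieces $\remainder^C_k$ are precisely the chains one must insert to close up (a) the boundary of $\tilde\baseapprox_k$ left dangling by the cells of $\baseapprox_k$ that the pairing $\mathsf P$ of Step~3 failed to pair, and (b) the surplus sampled points $\{p^k_i\}$ that had to be added to make U\ref{U:respectsboundarycondition} solvable. For (a): Remark~\ref{rmk:pairings} says the proportion of unpaired cells tends to $0$, and since $\mass(\base_k)\le\mass(\mu)$ is bounded uniformly in $k$, the mass carried by the unpaired cells --- hence the dangling boundary, hence (with short closing cells, as E\ref{E:massminimizing} permits) the remainder mass it generates --- tends to $0$ after summation over all flags. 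For (b): conditions U\ref{U:containsbase} and U\ref{U:isofrightmass} together with \eqref{eq:zeroskeletonmeasurecomparison} keep $\sum_C\measure{\finalchain^0}^C$ within $1/k$ of $\sum_C\mu^C$, a measure of bounded mass, so in the limit the surplus points add nothing beyond the mass already recorded in the slices $\mu^C$, and the part of them not inherited from $\baseapprox_k$ again closes up with vanishing mass. Since each $\mass(\measure{\remainder}^C_k)$ is itself bounded by $\mass(\mu)$ and hence eventually $\le1$, one has $\mass(\measure{\remainder}^C_k)^p\le\mass(\measure{\remainder}^C_k)$, so the right-hand side of the previous display is $o(1)$; combined with the first reduction this proves the lemma.

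I expect the base case to be the main obstacle: converting the heuristic pairing-density statement of Remark~\ref{rmk:pairings} into a genuine quantitative bound on the unpaired mass, tracking how the balance condition U\ref{U:respectsboundarycondition} (which is Lemma~\ref{lem:boundarycondition} transplanted to $\measure{\finalchain^0}$) together with U\ref{U:containsbase}, U\ref{U:isofrightmass} and \eqref{eq:zeroskeletonmeasurecomparison} forces the surplus points to contribute negligibly, and making sure that none of these smallness gains is swamped either by the constant $\constisoperimetric^{q_n}$, which depends on $n$, or by the number of simplices of $T_k$, which grows with $k$ and hence forces the per-simplex errors $\varepsilon^k_\ell$ to be taken uniformly negligible against the simplex count. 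By comparison the isoperimetric recursion itself is routine once the cycle conditions E\ref{E:iscycle} are in place.
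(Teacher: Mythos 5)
Your proposal follows essentially the same route as the paper: the decomposition $\finalchain^C=\bar\baseapprox_k^C+\remainder_k^C$, the recursive application of Proposition~\ref{prop:isoperimetric} down the tower of properly nested simplices, and the base case resting on Remark~\ref{rmk:pairings}, U\ref{U:containsbase}, U\ref{U:isofrightmass} and \eqref{eq:zeroskeletonmeasurecomparison}. The extra care you take (the reduction via E\ref{E:containsbaseapprox}, the bound $\mass^p\le\mass$ once masses drop below $1$, and the uniformity of the $\varepsilon^k_\ell$ against the simplex count) only makes explicit what the paper leaves implicit, so this is correct and matches the paper's argument.
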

\begin{rmk}\label{rmk:measurevanishes}
We may assume that as $k\to\infty$
\begin{equation}\label{eq:smallmeasure}
(\measure{\finalchain}-\measure{\baseapprox_k})(T^nM)\to 0,
\end{equation}
and that the support of the measure $\measure{\finalchain}-\measure{\baseapprox_k}$ is contained in a compact subset of $T^nM$ that does not depend on $k$.

Indeed, the difference $\finalchain-\baseapprox_k$ corresponds exactly to the cells we added in order to close up the chain $\baseapprox_k$ and get a cycle. We may reparameterize these cells $\gamma$ so that the measure they contribute, $\measure\gamma(T^nM)$, will be approximately equal to their mass $\mass(\measure\gamma)$, which is bounded by Lemma \ref{lem:massdifferencevanishes}. In doing so, by making sure that the partial derivaties of $\gamma$ stay almost perpendicular, we may keep $\supp\measure\gamma$ within the compact set $\{(x,v_1,\dots,v_n)\in T^nM:|(v_1,\dots,v_n)|\leq 2\}$. 
\end{rmk}

\subsection{Conclusion}
\label{sec:conclusion}
%
%

\begin{proof}[Proof of Theorem \ref{thm:holonomic}]
Let $\mu\in\Mn$ be a positive measure. If $\mu$ satisfies Condition \cyccondition, it follows from Stokes's theorem that it also satisfies Condition \holcondition.

To prove the other direction, assume that $\mu$ satisfies Condition \holcondition. 
By Lemma \ref{lem:smoothing}, we can assume that $\mu$ is smooth. We can thus construct for $k\geq 1$ triangulations $T_k$ as in Section \ref{sec:triangulations}, base measures $\base_k$ as in Section \ref{sec:basemeasureconstruction}, chains $\baseapprox_k$ approximating these as in Section \ref{sec:basemeasureapproximation}, and cycles $\finalchain$ as in Section \ref{sec:cyclesconstruction} that contain $\baseapprox_k$. We have
\begin{equation}\label{eq:finalestimate}
\dist_\Mn(\mu,\measure{\finalchain})\leq\dist_\Mn(\mu,\base_k)+\dist_\Mn(\base_k,\measure{\baseapprox_k})+\dist_\Mn(\measure{\baseapprox_k},\measure{\finalchain}).
\end{equation}
The first two summands on the right-hand-side vanish asymptotically by construction. The last term, as per the definition of $\dist_\Mn$ in equation \eqref{eq:metricMn}, has two parts: the mass difference, which tends to zero by Lemma \ref{lem:massdifferencevanishes}, and the one involving the functions $f_i$. The second one also vanishes asymptotically because the difference between $\baseapprox_k$ and $\finalchain$ corresponds to the cells added to close $\baseapprox_k$ up, and the measure these cells contribute can be taken to tend to zero, as explained in Remark \ref{rmk:measurevanishes}. Since each term in the second part of the definition \eqref{eq:metricMn} of $\dist_\Mn$ is essentially the difference of integrals of functions that are everywhere $\leq 2^{-m}$ and since the total measure involved $\measure{\finalchain}-\measure{\baseapprox_k}$ vanishes asymptotically as $k\to\infty$, the sum also vanishes in that limit.
We conclude that the measures induced by the cycles $\finalchain$ indeed approximate $\mu$, so $\mu$ satisfies Condition \cyccondition.

The last statement of the theorem follows from formula \eqref{eq:finalestimate} together with the following considerations. First, it is clear that
\[\int L\,d\mu-\int L\,d\base_k \to 0\quad\textrm{and} \int L\,d\base_k-\int L\,d\measure{\baseapprox_k}\to 0\]
because this is by construction true in any compact subset of $T^nM$ and because of the $\mu$-integrability of $L$. The difference
\[\int L\,d\measure{\baseapprox_k} - \int L\,d\measure{\finalchain}\]
also tends to zero because of equation \eqref{eq:smallmeasure} and because the support of the difference can be taken to be contained in a compact set independent of $k$, as explained in Remark \ref{rmk:measurevanishes}, and $L$ is bounded within this compact set because it is continuous.
\end{proof}

 \bibliography{bib}{}
 \bibliographystyle{plain}
\end{document}